\title{\bf{On finite determinacy for matrices of power series}}
\author{
       \bf{ Gert-Martin Greuel and Thuy Huong Pham}\\
 }
\date{\today}

\documentclass[11pt]{article}
\usepackage{amsmath, amsthm, amssymb, mathrsfs, amscd, amsthm, amscd,amsfonts,graphicx}
\usepackage{indentfirst,url,xypic}
\usepackage{lipsum} 
\usepackage[all]{xy}
\usepackage{url}
\usepackage[colorlinks,plainpages]{hyperref}
\usepackage{verbatim}
\usepackage{fancyvrb}
\usepackage{listings}
\lstset{
	basicstyle=\ttfamily,
	mathescape
} 
\usepackage[colorlinks,plainpages]{hyperref}
\setlength{\parindent}{15pt}
\hypersetup{
	colorlinks=true,
	linkcolor=blue,
	filecolor=magenta,      
	urlcolor=cyan,
	citecolor=blue
}

\DeclareMathOperator{\rank}{rank}

\DeclareMathOperator{\variety}{V}

\DeclareMathOperator{\support}{Supp}

\DeclareMathOperator{\modular}{mod}
\DeclareMathOperator{\characteristic}{char}
\DeclareMathOperator{\order}{ord}
\DeclareMathOperator{\annihilator}{Ann}
\DeclareMathOperator{\spectrum}{Spec}

\newtheorem{Definition}{ Definition}[section]
\newtheorem{Theorem}[Definition]{Theorem}
\newtheorem{Remark}[Definition]{Remark}
\newtheorem{Proposition}[Definition]{Proposition }
\newtheorem{Corollary}[Definition]{Corollary}
\newtheorem{Example}[Definition]{Example}
\newtheorem{Lemma}[Definition]{Lemma}

\newcommand{\R}{\mathbb{R}}

\newcommand{\N}{\mathbb{N}}
\newcommand{\C}{\mathbb{C}}
\newcommand{\Z}{\mathbb{Z}}

\hoffset= -0.4 in
\voffset= -0.4 in
\textwidth=426pt

\begin{document}
\maketitle

\begin{abstract}
Let $R=K[[x_1,...,x_s]]$ be the ring of formal power series with maximal ideal $\mathfrak{m}$ over a field $K$ of arbitrary characteristic. On the ring $M_{m,n}$ of $m\times n$ matrices $A$ with entries in $R$ we consider several equivalence relations given by the action on $M_{m,n}$ of a group $G$. $G$ can be the group of automorphisms of $R$, combined with the multiplication of invertible matrices from the left, from the right, or from both sides, respectively. We call $A$ finitely $G$-determined if $A$ is $G$-equivalent to any matrix $B$ with ${A-B} \in {\mathfrak{m}^k M_{m,n}}$ for some finite integer $k$, which implies in particular that $A$ is $G$--equivalent to a matrix with polynomial entries.

The classical criterion for analytic or differential map germs $f:(K^s,0) \to (K^m,0)$, $K = \R, \C$, says that $f \in M_{m,1}$ is finitely determined (with respect to various group actions) iff the tangent space to the orbit of $f$ has finite codimension in $M_{m,1}$. We extend this criterion to arbitrary matrices in $M_{m,n}$  if the characteristic of K is 0 or, more general, if the orbit map is separable. In positive characteristic however, the problem is more subtle since the orbit map is in general not separable, as we show by an example. This fact had been overlooked in previous papers. Our main result is a general sufficient criterion for finite $G$-determinacy in $M_{m,n}$ in arbitrary characteristic in terms of the tangent image of the orbit map, which we introduce in this paper. This criterion provides a computable bound for the $G$-determinacy of a matrix $A$ in $M_{m,n}$, which is new even in characteristic 0. \\

\end{abstract}

%%%%%%%%%%%%%%%%%%%%%%%%%%%%%%
%%%%%%%%%%%%%%%%%%%%%%%%%%%%%%
\section{Overview}
Throughout this paper let $K$ denote a field of arbitrary characteristic and 
$$R:=K[[{\bf{x}}]]=K[[x_1, x_2,..., x_s]]$$ 
the formal power series ring over $K$ with maximal ideal $\mathfrak{m}$. We denote by 
$$M_{m,n}:=Mat(m,n, R)$$
the ring of all $m\times n$ matrices of power series. We consider the  group of $K$-algebra automorphisms of $R$
$$\mathcal{R}:= Aut(R)$$
and the semi-direct products
\begin{align*}
&{\mathcal{G}}_l:=GL(m,R)\rtimes{\mathcal{R}},\\
&{\mathcal{G}}_r:=GL(n,R)\rtimes{\mathcal{R}}, {\text{and}}\\
&\mathcal{G}_{lr}:=\left(GL(m,R)\times GL(n,R)\right)\rtimes \mathcal{R}.
\end{align*}
These groups act on the space $M_{m,n}$ as follows
\begin{align*} 
&(\phi, A)\mapsto \phi (A):=[\phi(a_{ij}({\bf{x}}))]=[a_{ij}(\phi ({\bf{x}}))],\\
%%%%%%%%%%%%%%%%
&(U,\phi, A)\mapsto U\cdot\phi (A)=U\cdot[\phi(a_{ij}({\bf{x}}))]=U\cdot[a_{ij}(\phi ({\bf{x}}))],\\
%%%%%%%%%%%%%%%%%%%% 
&(V,\phi, A)\mapsto \phi (A)\cdot V=[\phi(a_{ij}({\bf{x}}))]\cdot V=[a_{ij}(\phi ({\bf{x}}))]\cdot V, {\text{and}}\\
%%%%%%%%%%%%%%%%%%
&(U,V,\phi, A)\mapsto U\cdot\phi (A)\cdot V=U\cdot[\phi(a_{ij}({\bf{x}}))]\cdot V=U\cdot[a_{ij}(\phi ({\bf{x}}))]\cdot V,
\end{align*}
%%%%%%%%%%%%%%%%%%
where ${\bf{x}}=(x_1, x_2,\ldots, x_s)$, $A=[a_{ij}({\bf{x}})]\in M_{m,n}$, $U\in GL(m,R)$, $V\in GL(n,R)$, and $\phi ({\bf{x}}):=(\phi_1,\ldots,\phi_s)$ with $\phi_i:=\phi(x_i)\in\mathfrak{m}$ for all $i=1,\ldots,s$.

\vskip 7pt Throughout this paper let $G$ denote one of the groups $\mathcal{R}$, ${\mathcal{G}}_l$, ${\mathcal{G}}_r$, and ${\mathcal{G}}_{lr}$. \\

For $A\in M_{m,n}$, we denote by $GA$ the orbit of $A$ under the action of $G$ on $M_{m,n}$. Two matrices $A, B \in M_{m,n}$ are called  {\it $G$-equivalent}, denoted  $A\mathop\sim\limits^{G} B$, if $B\in GA$.  A matrix $A\in  M_{m,n}$ is said to be {\it $G$ $k$-determined} if for each matrix $B\in  M_{m,n}$ with $B-A\in \mathfrak{m}^{k+1}\cdot M_{m,n}$, we have $B\mathop\sim\limits^{G} A$, i.e. if $A$ is $G$-equivalent to every matrix which coincides with $A$ up to and including terms of degree $k$. $A$ is called {\it finitely $G$-determined} if there exists a positive integer $k$ such that it is $G$ $k$-determined. 

\vskip 7pt Note that the case $n=1$, i.e. $M_{m,1}$, covers the case of map-germs $(f_1,...,f_m)$, ${K[[y_1,...,y_m]] \to R}$, $y_i \mapsto f_i$, where $G$-equivalence is called {\it right-equivalence} for $G = \mathcal{R}$ and {\it contact-equivalence} for $G = {\mathcal{G}}_l$; the case $m=n=1$ is the classical case of one power series. In \cite{GP16} we give necessary and sufficient conditions for finite determinacy of map germs in arbitrary characteristic, in particular for complete intersections, also for non-separable orbit maps. 

\vskip 7pt Over the real and complex numbers $K$, finite determinacy was studied for $M_{m,1}$, i.e. for differentiable and analytic map-germs $(f_1,...,f_m):(K^m,0) \to (K^s,0)$, by \cite{Tou68}, \cite{Mat68}, \cite{Wal81}, \cite{Gaf79}, \cite{Ple80}, \cite{Dam81}, \cite{BdPW87}, \ldots.  In \cite{BK16}, the authors study finite determinacy for matrices of power series in $M_{m,n}$ over fields of characteristic $0$ with respect to various equivalence relations. As the methods of proof usually involve integration of vector fields, they can not be transferred to positive characteristic. Moreover, the starting point of all previous investigations was the fact that the tangent space at a point of the orbit is equal to the image of the tangent space of the group at the identity under the orbit map. This is in general no more true in positive characteristic as we show in this paper.  The case of one power series, i.e. $M_{1,1}$, over a field of arbitrary characteristic was treated in \cite{GrK90} for contact equivalence and in \cite{BGM12} for right and contact equivalence. The present paper is an extension of some results of the PhD thesis of the second author,  see \cite{P16}.

%\vskip 7pt 
\indent We give a short overview of the results of this paper: In section \ref{tangent image} we introduce the {\it tangent image} $\widetilde T_A(GA)$ at a matrix $A$ to the orbit $GA$ to be the inverse limit of the images of the tangent maps to 
	\begin{align*} 
	o^{(k)}: G^{(k)}&\to G^{(k)}jet_k(A),
	\end{align*} 
where $o^{(k)}$ is the induced map by restricting the orbit map $G \to GA$ to the jet space of power series up to order $k$. In Proposition {\ref {tangent space}} and Definition {\ref {tangent space for groups}} we give an explicit and computable description of $\widetilde T_A(GA)$ in terms of the entries and the partials of the entries of $A$.\

In section \ref{sufficient condition 1}, Theorem {\ref{finite determinacy}}, we prove our main result:\

\vskip 7pt
{\bf Theorem:} 
	{\it
Let $A \in \mathfrak{m}\cdot M_{m,n}$. If 
         $$\dim_{K}\left( M_{m,n}/ \widetilde T_A(GA)\right)<\infty,$$
then $A$ is finitely $G$-determined. More precisely, if there is some integer $k\ge 0$ such that

        $$ \mathfrak{m}^{k+2}\cdot M_{m,n}\subset \mathfrak{m}\cdot \widetilde T_A(GA),$$
	then A is G $(2k-\order(A)+2)$-determined.
}
\vskip 7pt

Here, for $f\in R$, $f\ne 0$, we denote by $\order(f)$ the order of $f$, i.e. the maximal positive integer $l$ such that $f\in \mathfrak{m}^l$, and for $f=0$, we set $\order(f)=\infty$. For $A=[a_{ij}]\in M_{m,n}$, we set $\order(A):=\min\{\order(a_{ij})\}.$

\vskip 7pt
In section \ref{sufficient condition 2}, we formulate several other equivalent sufficient conditions for finite $G$-determinacy and prove alternative determinacy bounds. If the orbit map $o^{(k)}$ is separable for sufficiently big $k$, these conditions are even equivalent to finite determinacy as we prove in Theorem \ref{separable criterion}:  

\vskip 7pt
{\bf Theorem:} 
{\it
Assume there is some $k\in \N$ such that the orbit map  $G^{(l)}\to G^{(l)}jet_l(A)$ is separable for all $l\ge k$ (e.g. if $\characteristic(K)=0$). Then $A$ is finitely $G$-determined if and only if $\widetilde T_A(GA)$ has finite codimension in $M_{m,n}$.
}

\vskip 7pt
This follows since for a separable orbit map the tangent map is surjective and hence the tangent image to the orbit coincides with the tangent space to the orbit. We show by an example (cf. Example \ref{surjective tangent map}) that even in the simplest case of one function $f$ the orbit map need not be separable if $K$ has positive characteristic, also for $f$ being finitely determined. This fact had been overlooked in previous papers and came as a surprise to us, see Remark \ref{gap}. Finally we apply the above result to classify finitely $\mathcal{R}$-determined matrices $A \in M_{m,n}$ (under the separability condition) and show that for $m>1$ finite $\mathcal{R}$-determinacy holds only in the non--singular case.

% % % % % % % % % % % % % % % % % % % % % % % % % % % % % % % % % % % %
\section{The tangent image of the orbit map}\label{tangent image}

 In this section, we identify the images of the tangent maps induced by the orbit maps. Since the power series ring $R$ and the group $G \in \left\{\mathcal{R}, \mathcal{G}{_l}, \mathcal{G}{_r}, \mathcal{G}_{lr}\right\}$ are infinite dimensional over $K$ we pass, as usual, to the space of k-jets.

For $A\in M_{m,n}$ and $k\in \mathbb{N}$, we denote by 
$jet_k(A)$ the image of $A$ in $M_{m,n}/\mathfrak{m}^{k+1}\cdot M_{m,n}$, the {\textit{ k-jet}} of $A$, and by
	$$M_{m,n}^{(k)}:=M_{m,n}/\mathfrak{m}^{k+1}\cdot M_{m,n},$$ 
the space of all $k$-jets. The k-jet of $G$ is
	$$G^{(k)}:=\{jet_k(g)\mid g\in G\},$$
where $jet_k(g)=\left(jet_k(U), jet_k(V), jet_k(\phi)\right)$, 
$jet_k(\phi)(x_i)=jet_k(\phi(x_i))$, for $g=(U,V,\phi)\in \mathcal{G}_{lr}$ and similar for $G=\mathcal{R}$, $\mathcal{G}_l$, $\mathcal{G}_r$. Then $G^{(k)}\in\left\{\mathcal{R}^{(k)}, \mathcal{G}_l^{(k)}, \mathcal{G}_r^{(k)}, \mathcal{G}_{lr}^{(k)}\right\}$ is an affine algebraic group with group structure given by $jet_k(g)jet_k(h) = jet_k(gh)$, acting algebraically on the affine space $M_{m,n}^{(k)}$ via
\begin{align*}
	&G^{(k)}\times M_{m,n}^{(k)}\to M_{m,n}^{(k)}, \hskip4pt
	\left(jet_k(g), jet_k(A)\right)\mapsto jet_k(gA),
\end{align*}
i.e. we let representatives act and then take the $k$-jets. Everything is defined over $K$. 

\begin{Remark} \label{field extension} \rm
For a geometric interpretation of the orbit map and since we are going to apply results about algebraic group actions wich are formulated for algebraically closed fields, we fix an algebraically closed extension field $K'$ of $K$, e.g. an algebraic closure of $K$.

(1) Let $H$ be an algebraic group defined over $K$ acting $K$--algebraically on the algebraic $K$--variety $X$. Then $X$ resp. $H$ defines an algebraic variety $X'$ resp. an algebraic group $H'$ over $K'$ and the action of $H$ on $X$ extends naturally to an action of $H'$ on $X'$. $X'$ and $H'$ are $K'$--varieties, i.e. schemes of finite type over $K'$, which are defined over $K$, as well as the action $H'\times X' \to X'$ (in the sense of Borel, cf. [Bor 91], i.e. given by polynomial data, where the polynomials have coefficients in $K$, with points being closed points). For $x\in X$ (i.e. a $K$--rational point of $X'$) the orbit $H'x$ is a subvariety of $X'$ defined over $K$ and the orbit map $o':H'\to X',  h \to hx,$ is also defined over $K$.

(2) Recall that for $X$ an algebraic $K$--variety with structure sheaf $\mathcal{O}_X$, the Zariski-tangent space $T_xX$ of $X$ at the point $x\in X$ can be described as
$$T_xX:=\left\{K{\text{-algebra homomorphisms}}\hskip 4pt  f: \mathcal{O}_{X,x}\to K[\epsilon] \hskip 4pt\big| \hskip 4pt p\circ f=\chi_x \right\},$$
where $\mathcal{O}_{X,x}$ is the local ring of $X$ at  $x$,   $K[\epsilon]$ the ring of dual numbers $K[\epsilon]=K[t]/\langle t^2\rangle=\{a+b\epsilon\hskip 4pt| \hskip 4pt a, b\in K, \epsilon^2=0\},$
and $p$ and $\chi_x$ are the canonical residue maps $K[\epsilon]\to K[\epsilon]/\langle \epsilon \rangle= K$ and $\mathcal{O}_{X,x}\to \mathcal{O}_{X,x}/\mathfrak{m}_x=K$, where $\mathfrak{m}_x$ is the maximal ideal of $\mathcal{O}_{X,x}$.

In the same way we have the tangent space $T_xX'$ defined w.r.t. $K'$, satisfying 
\[
T_xX'=T_xX\otimes_K K'.
\]

(3) The tangent map $To': T_eH' \to T_xX'$, $e\in H$ the identity element, of the orbit map $o'$ is defined over $K$ (induced by $\mathcal{O}_{X,x} \to \mathcal{O}_{H,e}$) and hence induces a map $T_eH\to T_xX$ which we denote by $To$, satisfying $To'=To\otimes_K K'$. We define
\[
\begin{array}{lcl}
\widetilde{T}_x(Hx): & = & \text {im} (To: T_eH\to T_xX),\\
\widetilde{T}_x(H'x): & = & \text {im} (To': T_e H'\to T_xX')
\end{array}
\]
and call it the \textit{tangent image} at $x$ of the orbit map. Obviously we have
\[
\widetilde{T}_x(H'x)=\widetilde{T}_x(Hx)\otimes_K K'.
\]
\end{Remark}

The following proposition is well--known (cf. \cite{WA05} Theorem 3.1).

\begin{Proposition}\label{Proposition2.3}
Let $K'$ be an algebraically closed field, $X'$ an algebraic $K'$--variety and  $H'$ an algebraic group over $K'$ acting algebraically on $X'$. For $x \in X'$ the orbit map $o':H'\to H'x$ is separable iff the tangent map $To': T_eH'\to T_x(H'x)$ is surjective. 
\end{Proposition}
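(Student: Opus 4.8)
The plan is to reduce the statement to the well-known characterization of separability of a dominant morphism of irreducible varieties via the behaviour of the tangent map at a general point, combined with the homogeneity of the orbit under the transitive $H'$-action. First I would recall the setup: since $K'$ is algebraically closed, the orbit $Y' := H'x$ is a locally closed, smooth, irreducible subvariety of $X'$ (an orbit of an algebraic group action is always smooth and locally closed, cf. Borel), and $o' : H' \to Y'$ is a surjective, hence dominant, morphism of irreducible varieties. After replacing $H'$ by the identity component $H'^{\circ}$ (which does not change separability of $o'$ nor the image of $To'$, since $H'^{\circ}$ has finite index and $o'$ restricted to $H'^{\circ}$ is still dominant onto $Y'$, $Y'$ being irreducible), we may assume $H'$ is irreducible as well.

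Next I would invoke the standard fact from algebraic geometry that for a dominant morphism $f : V \to W$ of irreducible varieties over an algebraically closed field, $f$ is separable (i.e.\ the induced extension $K'(W) \hookrightarrow K'(V)$ of function fields is separably generated) if and only if the differential $df_v : T_v V \to T_{f(v)} W$ is surjective for $v$ in a dense open subset of $V$. Applying this to $f = o'$, separability of $o'$ is equivalent to surjectivity of $To'$ at a general point $g \in H'$. The key point is now to upgrade "general point" to "the identity": because the action is transitive on $Y'$ and $o'$ intertwines the left-translation action of $H'$ on itself with the $H'$-action on $Y'$ — concretely $o' \circ L_h = \tau_{h} \circ o'$ where $L_h$ is left translation by $h$ on $H'$ and $\tau_h : Y' \to Y'$, $y \mapsto hy$ — and since $L_h$ and $\tau_h$ are isomorphisms, the rank of the tangent map $T_g o'$ is independent of $g \in H'$. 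Hence $To'$ is surjective at one point iff it is surjective at every point, in particular iff it is surjective at $e$. Combining the last two sentences gives the claim: $o'$ separable $\iff$ $T_g o'$ surjective for general $g$ $\iff$ $T_e o' = To'$ surjective.

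The step I expect to require the most care is the homogeneity argument, i.e.\ making precise that $T_e o'$ being surjective onto $T_x Y'$ forces $T_x Y' = T_x X'$ — that is, that surjectivity of the tangent map to the orbit map really detects the orbit's tangent space and not just $T_x X'$. Here one uses that $Y' = H'x$ is smooth at $x$ of dimension $\dim Y'$, so $\dim T_x Y' = \dim Y'$, and that $o' : H' \to Y'$ is dominant between varieties of dimensions $\dim H'$ and $\dim Y'$; the fibre dimension theorem gives $\dim_g(o')^{-1}(x) = \dim H' - \dim Y'$ for general $g$, and then the rank of $T_g o'$ equals $\dim Y'$ precisely when the fibre is smooth of the expected dimension at $g$, which is the separability condition. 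One must be slightly careful in positive characteristic that "dominant" is not automatically "separable", which is exactly the content being characterized, and that the dense-open locus on which the differential has maximal rank is nonempty — this is where separable generation of the function field extension enters. Finally I would remark that the direction "$To'$ surjective $\Rightarrow$ $o'$ separable" and its converse are both covered by the cited reference \cite{WA05}, Theorem 3.1, so in the write-up it suffices to indicate the homogeneity reduction and quote that theorem; no integration of vector fields or characteristic-zero input is used anywhere.
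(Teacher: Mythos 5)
The paper offers no proof of Proposition \ref{Proposition2.3} at all: it is stated as well known with a bare citation of \cite{WA05}, Theorem 3.1. Your sketch supplies the standard argument behind that citation and is essentially correct: the orbit $H'x$ is a smooth, locally closed, equidimensional subvariety, the orbit map is dominant onto it, and the characterization of separability of a dominant morphism of irreducible varieties by surjectivity of the differential at a general (smooth) point, combined with the equivariance $o'\circ L_h=\tau_h\circ o'$ (which makes the rank of $T_go'$ independent of $g$ because $L_h$ and $\tau_h$ are isomorphisms), transports the generic condition to the identity. The reduction to the identity component is indeed needed exactly as you describe, since for disconnected $H'$ the orbit need not be irreducible and the function-field formulation of separability refers to the component through $x$; note $T_eH'^{\circ}=T_eH'$ and $H'^{\circ}x$ is open in $H'x$, so nothing changes on either side. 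One sentence in your final paragraph is off, though: surjectivity of $T_eo'$ onto $T_x(H'x)$ neither requires nor implies $T_x(H'x)=T_xX'$; the target of the tangent map in the proposition is the tangent space of the orbit, not of the ambient variety, and the only input needed there is smoothness of the orbit, which gives $\dim_{K'}T_x(H'x)=\dim H'x$ so that ``surjective'' is the same as ``of rank $\dim H'x$''. With that sentence deleted (the fibre-dimension digression is then unnecessary) the argument is complete, and, as you observe, no characteristic-zero input is used anywhere.
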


Recall that the orbit map is called separable if the extension of fields of rational functions $K(H'x)\subset K(H')$ is separably generated, i.e. there is a transcendence base $\{x_i\}$ of $K(H'x)\subset K(H')$ such that $K(H')$ is a separable algebraic extension of $K(H'x)(\{x_i\})$. Separability holds e.g. if $\characteristic(K')=0$.

\begin{Corollary} \label{Corollary2.3}
Let $K$ be a field, $X$ an algebraic $K$--variety and  $H$ an algebraic group over $K$ acting algebraically on $X$. Let $K'$ be an algebraically closed extension field of $K$ and let $X'$ and $H'$ be as in Remark \ref {field extension}. For $x \in X$  the following are equivalent:
\begin{enumerate}
\item [(i)] The orbit map $o': H'\to H'x$ is separable.
\item [(ii)] $\dim_K \widetilde{T}_x(Hx)=\dim_x H'x$
\item [(iii)] $\widetilde{T}_x (Hx)=T_x(Hx)$
\end{enumerate}
These conditions hold in particular if $\characteristic(K)=0$.
\end{Corollary}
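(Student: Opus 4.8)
The plan is to move the whole statement to the algebraically closed extension field $K'$, where Proposition~\ref{Proposition2.3} is available, and then descend the resulting equivalences back to $K$ using the compatibilities recorded in Remark~\ref{field extension}.

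First I would observe that the orbit map $o\colon H\to X$ factors through the subvariety $Hx\subseteq X$, so its tangent map at $e$ factors as $T_eH\to T_x(Hx)\hookrightarrow T_xX$; hence $\widetilde{T}_x(Hx)=\operatorname{im}(To)$ is contained in $T_x(Hx)$, and likewise $\widetilde{T}_x(H'x)\subseteq T_x(H'x)$ over $K'$. All the vector spaces in sight are finite-dimensional over their respective fields and, by Remark~\ref{field extension}, are obtained from their $K$-counterparts by $-\otimes_K K'$; the orbit variety $H'x$ is defined over $K$ as well, so $\dim_x H'x$ does not change under the extension $K\subseteq K'$. Consequently: condition (iii) over $K$ holds iff $\dim_K\widetilde{T}_x(Hx)=\dim_K T_x(Hx)$, iff the same equality holds after base change, iff $\widetilde{T}_x(H'x)=T_x(H'x)$; and condition (ii) over $K$ holds iff $\dim_{K'}\widetilde{T}_x(H'x)=\dim_x H'x$. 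Thus it suffices to prove the equivalence of the three conditions under the extra hypothesis that $K=K'$ is algebraically closed, i.e. for $H'$, $X'$.

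So assume $K=K'$ is algebraically closed. By Proposition~\ref{Proposition2.3}, (i) is equivalent to surjectivity of the tangent map $To'\colon T_eH'\to T_x(H'x)$. Since $\widetilde{T}_x(H'x)=\operatorname{im}(To')\subseteq T_x(H'x)$, surjectivity of $To'$ is literally the statement $\widetilde{T}_x(H'x)=T_x(H'x)$, which is (iii); hence (i)$\iff$(iii). For (ii), the essential input is that an orbit is smooth: $H'x$ is a homogeneous space for the smooth group $H'$, hence --- being a reduced variety over the perfect field $K'$ it is smooth on a dense open set, and being homogeneous under a transitive group action it is then smooth at every point --- so $\dim_x H'x=\dim_{K'}T_x(H'x)$. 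Because $\widetilde{T}_x(H'x)\subseteq T_x(H'x)$ are finite-dimensional, their equality is equivalent to equality of dimensions, i.e. to $\dim_{K'}\widetilde{T}_x(H'x)=\dim_{K'}T_x(H'x)=\dim_x H'x$, which is (ii). Hence (i)$\iff$(ii)$\iff$(iii). The final assertion is immediate: in characteristic $0$ every orbit map is separable (cf. the remark following Proposition~\ref{Proposition2.3}), so (i) holds automatically.

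The formal bookkeeping is light; the step I would flag as the main obstacle is the smoothness of the orbit in positive characteristic --- one must use that $K'$ is perfect, so that the reduced orbit is generically smooth, together with homogeneity under the transitive action --- and, relatedly, that the formation of $\widetilde{T}_x$, of the Zariski tangent space, and of $\dim$ all commute with the base change $K\subseteq K'$, which rests on the orbit map and the orbit variety being genuinely defined over $K$ as set up in Remark~\ref{field extension}.
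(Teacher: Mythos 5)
Your argument is correct and follows essentially the same route as the paper's own proof: Proposition~\ref{Proposition2.3} identifies separability with surjectivity of the tangent map, which after the base-change compatibilities of Remark~\ref{field extension} is exactly condition (iii), while smoothness of the orbit gives $\dim_x H'x=\dim_{K'}T_x(H'x)=\dim_K T_x(Hx)$ and hence the equivalence of (ii) and (iii). The paper's proof is simply a terser version of yours, leaving the smoothness of the homogeneous space and the descent bookkeeping implicit.
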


\begin{proof}
Since the orbit $H'x$ is smooth we have $\dim_x H'x=\dim_{K'} T_x(H'x)$ and the latter equals $\dim_K T_x(Hx)$ by Remark \ref {field extension}. This shows the equivalence of (ii) and (iii). The equality in (iii) is equivalent to $\widetilde{T}_x(H'x)=T_x(H'x)$ and hence (iii) is equivalent to (i) by Proposition \ref{Proposition2.3}.
\end{proof}

\vskip 7pt

The following easy lemma is used below; it replaces the Taylor series in positive characteristic.
\begin{Lemma}\label{expansion}
	Let $f({\bf{x}})=
	\sum\limits_{\left| {\bf{\alpha}}  \right| \ge ord(f)} {c_\alpha  \bf x^{\bf{\alpha}}}\in K[[{\bf x}]]$ and ${\bf z}=(z_1,\ldots,z_s)$ new variables. Then
	\begin{align*}
	f({\bf x+z})=f({\bf x})+
	\sum\limits_{\nu = 1}^s {\frac{{\partial f({\bf x})}}{{\partial x_\nu }}\cdot z_\nu } +
	\sum\limits_{\left| \alpha  \right| \ge \order(f)} {c_\alpha\cdot  \left( {\sum\limits_{\left| \gamma  \right| \ge 2\atop 
				\gamma  \le \alpha } {\binom
				{\alpha _1 } { \gamma _1} \cdot\ldots\cdot\binom
				{\alpha _s } { \gamma _s}{\bf x}^{\alpha  - \gamma }{\bf z}^\gamma} }\right)},
	\end{align*}
	where $\gamma\le \alpha$ means that $\gamma_\nu\le\alpha_\nu$ for all $\nu=1,\ldots,s$, $\binom {\alpha _\nu } { \gamma _\nu}\in \Z$ for all $\nu=1,\ldots,s$ and if $\characteristic(K)=p>0$  we denote for $k\in \Z$
	$$k{\bf x}^{\alpha  - \gamma }{\bf z}^\gamma= 
	\begin{cases} 0 & \mbox{if }  p\mid k \\ 
	k {(\modular\hskip 3pt p)}{\bf x}^{\alpha  - \gamma }{\bf z}^\gamma & \mbox{if }p\nmid k 
	\end{cases} $$
\end{Lemma}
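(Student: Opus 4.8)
The plan is to reduce everything to the binomial theorem applied monomial by monomial, and then to sort the resulting finitely many terms by their degree in $\mathbf{z}$. First I would check that the right-hand side is a well-defined element of $K[[\mathbf{x},\mathbf{z}]]$: every contribution coming from a fixed multi-index $\alpha$ is a sum of monomials in $\mathbf{x},\mathbf{z}$ of total degree exactly $|\alpha|$, so for each fixed total degree $d$ only the finitely many $\alpha$ with $|\alpha|=d$ contribute, and the same remark applies to $f(\mathbf{x}+\mathbf{z})$ on the left. Hence both sides lie in $K[[\mathbf{x},\mathbf{z}]]$ and it suffices to verify the identity after substituting a single monomial $c_\alpha\mathbf{x}^\alpha$ for $f$ and then summing over $\alpha$.

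For a single monomial I would expand, for each $\nu=1,\ldots,s$, the factor $(x_\nu+z_\nu)^{\alpha_\nu}=\sum_{\gamma_\nu=0}^{\alpha_\nu}\binom{\alpha_\nu}{\gamma_\nu}x_\nu^{\alpha_\nu-\gamma_\nu}z_\nu^{\gamma_\nu}$; this is an identity of polynomials with integer coefficients, hence valid in $R$ after reduction modulo $p$ when $\characteristic(K)=p>0$, which is exactly the convention fixed in the statement of the lemma for $k\mathbf{x}^{\alpha-\gamma}\mathbf{z}^\gamma$. Multiplying the $s$ factors together yields
$$(\mathbf{x}+\mathbf{z})^\alpha=\sum_{\gamma\le\alpha}\binom{\alpha_1}{\gamma_1}\cdots\binom{\alpha_s}{\gamma_s}\,\mathbf{x}^{\alpha-\gamma}\mathbf{z}^\gamma,$$
a finite sum. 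I would then split the index set $\{\gamma\le\alpha\}$ into the three parts $|\gamma|=0$, $|\gamma|=1$, and $|\gamma|\ge 2$. The part $|\gamma|=0$ contributes the single term $\mathbf{x}^\alpha$, which sums over $\alpha$ to $f(\mathbf{x})$. The part $|\gamma|=1$ consists of the terms with $\gamma=e_\nu$, contributing $\binom{\alpha_\nu}{1}\mathbf{x}^{\alpha-e_\nu}z_\nu=\alpha_\nu\mathbf{x}^{\alpha-e_\nu}z_\nu$ (zero when $\alpha_\nu=0$), and by the definition of the formal partial derivative this is precisely $\bigl(\partial\mathbf{x}^\alpha/\partial x_\nu\bigr)z_\nu$, with $\alpha_\nu$ read modulo $p$ in positive characteristic; summing over $\alpha$ and $\nu$ gives $\sum_{\nu=1}^{s}\bigl(\partial f(\mathbf{x})/\partial x_\nu\bigr)z_\nu$. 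The remaining part $|\gamma|\ge 2$, summed against $c_\alpha$ over all $\alpha$, is by definition the last sum in the asserted formula.

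Since there is nothing deep here, the only point requiring care — and what I would treat as the ``main obstacle'' — is bookkeeping in positive characteristic: one must make sure that the reductions modulo $p$ of the integers $\binom{\alpha_\nu}{\gamma_\nu}$ occurring in the binomial expansion, of the integer $\alpha_\nu$ occurring in the partial derivative, and of the mixed binomial products $\binom{\alpha_1}{\gamma_1}\cdots\binom{\alpha_s}{\gamma_s}$ in the remainder term are all performed consistently, which is exactly why the convention $k\mathbf{x}^{\alpha-\gamma}\mathbf{z}^\gamma$ is stipulated in the statement. Once that convention is in force, the three identifications above are literal equalities of power series and the lemma follows; in characteristic $0$ the formula is just the usual Taylor expansion, the $|\gamma|\ge 2$ sum being its higher-order remainder.
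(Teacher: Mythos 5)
Your argument is correct and complete: expanding each monomial by the binomial theorem over $\Z$, sorting the terms by $|\gamma|\in\{0\},\{1\},\{\ge 2\}$, and reducing all integer coefficients modulo $p$ consistently with the stated convention is exactly what is needed. The paper gives no proof of this lemma (it is labelled ``easy''), and your write-up is the standard argument the authors evidently intend, with the well-definedness of the infinite sums and the positive-characteristic bookkeeping being the only points of substance, both of which you address.
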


\vskip 7pt

Denote by 
$$E_{m,pq}\hskip 7pt (\text{resp.} \hskip 4pt  E_{n,hl}) $$
the $(p, q)$-th (resp. $(h, l)$-th) {{canonical matrix}} of the ring of square matrices $M_m:=Mat(m,m,R)$  (resp. $M_n:=Mat(n,n,R)$) with 1 at the place $(p,q)$ (resp. $(h,l)$) and 0 else. For $A=[a_{ij}({\bf x})]\in M_{m,n}$ we set $\frac{\partial A}{\partial x_\nu}=\left[\frac{\partial a_{ij}({\bf x})}{\partial x_\nu}\right]\in M_{m,n}$.

\begin{Proposition}{\label {tangent space}}
	Let $A\in M_{m,n}$ and $k\ge 1$. Let $G=\mathcal{G}_{lr}$ and let $e=(I_m, I_n, id_R)\in G^{(k)}$ be the identity of the group $G^{(k)}$. Then the tangent image, i.e. the image of the tangent map
	$$T_eG^{(k)}  \to T_{jet_k(A)}\left( G^{(k)}jet_k(A)\right),$$
	considered as a subspace of $ M_{m,n}^{(k)}$, is the submodule
	\begin{align*}
	&\hskip 20pt\widetilde T_{jet_k(A)}\left(G^{(k)}jet_k(A)\right):=\\
	&\hskip 10pt \left(\langle E_{m, pq}\cdot A\rangle +\langle A \cdot E_{n, hl}\rangle+ \mathfrak{m}\cdot\left\langle\frac{\partial A}{\partial x_\nu}\right\rangle+\mathfrak{m}^{k+1}\cdot M_{m,n}\right){\Big/}\mathfrak{m}^{k+1}\cdot M_{m,n},
	\end{align*}
	where $\langle E_{m, pq}\cdot A\rangle$, $\langle A\cdot E_{n, hl}\rangle$, and  $\left\langle\frac{\partial A}{\partial x_\nu}\right\rangle$ are the $R$-submodules of $M_{m,n}$ generated by $E_{m, pq}\cdot A$, $p, q=1,\ldots,m$, $A\cdot E_{n, hl}$, $h, l=1,\ldots,n$, and $\frac{\partial A}{\partial x_\nu}$, $\nu=1,\ldots,s$, respectively. 
	
Moreover, the tangent images of $\mathcal{R}$, $\mathcal{G}_l$, and $\mathcal{G}_r$ are respectively
		\begin{align*}
		&\widetilde T_{jet_k(A)}\left(\mathcal{R}^{(k)}jet_k(A)\right):= \left(\mathfrak{m}\cdot\left\langle\frac{\partial A}{\partial x_\nu}\right\rangle+\mathfrak{m}^{k+1}\cdot M_{m,n}\right){\Big/}\mathfrak{m}^{k+1}\cdot M_{m,n},\\
		&\widetilde T_{jet_k(A)}\left(\mathcal{G}_l^{(k)}jet_k(A)\right):=
		\left(\langle E_{m, pq}\cdot A\rangle + \mathfrak{m}\cdot\left\langle\frac{\partial A}{\partial x_\nu}\right\rangle+\mathfrak{m}^{k+1}\cdot M_{m,n}\right){\Big/}\mathfrak{m}^{k+1}\cdot M_{m,n},\\
		&\widetilde T_{jet_k(A)}\left(\mathcal{G}_r^{(k)}jet_k(A)\right):=
		\left(\langle A \cdot E_{n, hl}\rangle + \mathfrak{m}\cdot\left\langle\frac{\partial A}{\partial x_\nu}\right\rangle+\mathfrak{m}^{k+1}\cdot M_{m,n}\right){\Big/}\mathfrak{m}^{k+1}\cdot M_{m,n}.
		\end{align*}
\end{Proposition}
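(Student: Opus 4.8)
The plan is to compute the tangent map $T_eG^{(k)} \to T_{jet_k(A)}(G^{(k)}jet_k(A))$ explicitly by differentiating the orbit map along curves through the identity, and to identify its image with the submodule described. I would first treat the full group $G = \mathcal{G}_{lr}$; the three partial cases then follow by restricting to the relevant factor and dropping the corresponding generators. Since $G^{(k)}$ is an affine algebraic group acting algebraically on the affine space $M_{m,n}^{(k)}$, by Remark \ref{field extension} it suffices to work after base change to the algebraically closed field $K'$, where the Zariski tangent space at $e$ is spanned by tangent vectors of one-parameter (dual-number) deformations; everything is defined over $K$, so the computation descends.

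\textbf{Key steps.} First I would describe $T_eG^{(k)}$: an element is a triple $(\delta U, \delta V, \delta\phi)$ where $U = I_m + \epsilon\, \delta U$, $V = I_n + \epsilon\, \delta V$ with $\delta U \in Mat(m,m,R)^{(k)}$, $\delta V \in Mat(n,n,R)^{(k)}$ arbitrary, and $\phi = id_R + \epsilon\,\delta\phi$ with $\delta\phi(x_\nu) \in \mathfrak{m}$ (since $\phi_\nu = \phi(x_\nu) \in \mathfrak{m}$ must be preserved, the first-order part must lie in $\mathfrak{m}$) — this $\mathfrak{m}$-constraint is exactly what produces the factor $\mathfrak{m}$ in front of $\langle \partial A/\partial x_\nu\rangle$. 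Next I would apply the orbit map: $g A = U \cdot \phi(A) \cdot V$, and compute its first-order term in $\epsilon$ modulo $\mathfrak{m}^{k+1}$. Expanding, $(I_m + \epsilon\,\delta U)\,\phi(A)\,(I_n + \epsilon\,\delta V) \equiv A + \epsilon\big(\delta U \cdot A + A \cdot \delta V + (\phi(A) - A)/\epsilon\big)$, and by Lemma \ref{expansion} applied entrywise with ${\bf z} = \epsilon\,\delta\phi({\bf x})$ (so $\epsilon^2 = 0$ kills the higher terms), we get $\phi(A) - A \equiv \epsilon \sum_{\nu=1}^s \frac{\partial A}{\partial x_\nu} \cdot \delta\phi(x_\nu)$. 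Hence the image of the tangent map is $\{\delta U \cdot A + A \cdot \delta V + \sum_\nu \frac{\partial A}{\partial x_\nu}\,\delta\phi(x_\nu)\}$ modulo $\mathfrak{m}^{k+1}M_{m,n}$. Writing $\delta U = \sum_{p,q} u_{pq} E_{m,pq}$ and $\delta V = \sum_{h,l} v_{hl} E_{n,hl}$ with $u_{pq}, v_{hl} \in R$ (truncated), and noting $\delta\phi(x_\nu) \in \mathfrak{m}$, this set is precisely the claimed submodule $\big(\langle E_{m,pq}A\rangle + \langle A E_{n,hl}\rangle + \mathfrak{m}\langle \partial A/\partial x_\nu\rangle + \mathfrak{m}^{k+1}M_{m,n}\big)/\mathfrak{m}^{k+1}M_{m,n}$.

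\textbf{Remaining points and main obstacle.} I would then need to check that this image lands in $T_{jet_k(A)}(G^{(k)}jet_k(A))$ rather than merely in the ambient tangent space $T_{jet_k(A)}M_{m,n}^{(k)} = M_{m,n}^{(k)}$: this is automatic because the tangent map to the orbit map $G^{(k)} \to G^{(k)}jet_k(A) \hookrightarrow M_{m,n}^{(k)}$ factors through the tangent space to the orbit, and the image of $T_e$ of a group under an orbit map always lies in the tangent space to the orbit at the base point (a standard fact for algebraic group actions, cf. the setup of Proposition \ref{Proposition2.3}). The slightly delicate point — the one I would be most careful about — is verifying that the truncation to $k$-jets is compatible with the differentiation, i.e. that taking $jet_k$ of $gA$ and then differentiating gives the same submodule as differentiating and then truncating; this comes down to the fact that $\mathfrak{m}^{k+1}M_{m,n}$ is preserved under the $G^{(k)}$-action by construction, so all computations may be carried out with representatives and truncated at the end. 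The three remaining formulas for $\mathcal{R}$, $\mathcal{G}_l$, $\mathcal{G}_r$ follow immediately: set $\delta U = 0$ and/or $\delta V = 0$ in the computation above.
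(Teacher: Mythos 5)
Your proposal is correct and follows essentially the same route as the paper's proof: represent tangent vectors at $e$ by dual-number triples $(I_m+\epsilon\, \delta U,\, I_n+\epsilon\, \delta V,\, id_R+\epsilon\, \delta\phi)$ with $\delta\phi(x_\nu)\in\mathfrak{m}$, apply Lemma \ref{expansion} with ${\bf z}=\epsilon\,\delta\phi({\bf x})$ and $\epsilon^2=0$ to get the first-order term $\delta U\cdot A + A\cdot\delta V + \sum_\nu \delta\phi(x_\nu)\,\frac{\partial A}{\partial x_\nu}$, and read off the generators; your extra remarks on landing in the orbit's tangent space and on compatibility of truncation are correct but not points the paper dwells on.
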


\begin{proof}
	The orbit map 
	\begin{align*} 
	o^{(k)}: G^{(k)}&\to G^{(k)}jet_k(A),\\
	\left(jet_k(U), jet_k(V),jet_k(\phi)\right)&\mapsto jet_k\left(U\cdot\phi(A)\cdot V\right)=jet_k\left(U\cdot[a_{ij}(\phi({\bf x}))]\cdot V\right),
	\end{align*} 
	where $A=[a_{ij}({\bf x})]$, induces the tangent map 
	$$T_eG^{(k)}  \to T_{jet_k(A)}\left(G^{(k)}jet_k(A)\right).$$ 
	Each element of $T_eG^{(k)}$ can be represented by a triple
	$$\left(jet_k(I_n+\epsilon\cdot U), jet_k(I_m+\epsilon\cdot V), jet_k (id_R+\epsilon\cdot \phi)\right),$$
	where $U\in M_m$,  $V\in M_n$, and $\phi(x_\nu)=:\phi_\nu\in \mathfrak{m}$ for all $\nu=1,\ldots, s$.
	Letting this triple act on $jet_k(A)$  we get 
	$$jet_k\left((I_m+\epsilon\cdot U)\cdot[a_{ij}({\bf x}+\epsilon\cdot\phi({\bf x}))]\cdot(I_n+\epsilon\cdot V)\right),$$
	where $\phi({\bf x}):=(\phi(x_1),\ldots,\phi(x_s))=(\phi_1,\ldots, \phi_s)$. 
	
	Now for all $i=1,\ldots, m$ and $j=1,\ldots, n$, let $o_{ij}:=\order(a_{ij})$ and apply Lemma \ref{expansion} to 
	$$f=a_{ij}
	= \sum\limits_{\left| \alpha  \right| \ge o_{ij}} {c_\alpha ^{(ij)} {\bf x}^\alpha  }, \hskip 7pt  {\bf z}=\epsilon\cdot \phi({\bf x}).$$
	Since $\epsilon ^2=0$ we have
	\begin{align*}
	a_{ij}({\bf x}+{\bf z})=a_{ij}({\bf x})+
	\epsilon\cdot\sum\limits_{\nu = 1}^s {\frac{{\partial a_{ij}({\bf x})}}{{\partial x_\nu}}\cdot\phi_\nu}.
	\end{align*}
	This implies
	$$(I_m+\epsilon\cdot U)\cdot[a_{ij}({\bf x}+\epsilon\cdot\phi({\bf x}))]\cdot(I_n+\epsilon\cdot V)=A+\epsilon\cdot\left(A\cdot V+U\cdot A+\sum\limits_{\nu = 1}^s {\phi _\nu\cdot \frac{{\partial A}}{{\partial x_\nu }}}\right)$$
	so that the image of the triple under the tangent map is the $k$-jet of the matrix 
	$$A+\epsilon\cdot\left(A\cdot V+U\cdot A+\sum\limits_{\nu = 1}^s {\phi _\nu\cdot \frac{{\partial A}}{{\partial x_\nu }}}\right).$$
	Hence, the claim follows for $\mathcal{G}_{lr}$. The proofs for the other groups are similar.
\end{proof}

From now on, for $A\in M_{m,n}$, we use the notations $\langle E_{m, pq}\cdot A\rangle$, $\langle A\cdot E_{n, hl}\rangle$, and  $\left\langle\frac{\partial A}{\partial x_\nu}\right\rangle$ as in Proposition {\ref {tangent space}}.

\begin{Definition}{\label{tangent space for groups}}
	For $A\in M_{m,n}$, we call the $R$-submodules of $M_{m,n}$
    \begin{align*}
		&\widetilde {T}_A(\mathcal{R}A):= \mathfrak{m}\cdot\left\langle\frac{\partial A}{\partial x_\nu}\right\rangle,\\
		&\widetilde T_A(\mathcal{G}_lA):= \langle E_{m, pq}\cdot A\rangle +\mathfrak{m}\cdot\left\langle\frac{\partial A}{\partial x_\nu}\right\rangle,\\
		&\widetilde T_A(\mathcal{G}_rA):=\left \langle A\cdot E_{n, hl}\right\rangle + \mathfrak{m}\cdot\left\langle\frac{\partial A}{\partial x_\nu}\right\rangle,{\text and} \\
		&\widetilde T_A(\mathcal{G}_{lr}A):=\langle E_{m, pq}\cdot A\rangle +\langle A\cdot E_{n, hl}\rangle+ \mathfrak{m}\cdot\left\langle\frac{\partial A}{\partial x_\nu}\right\rangle
	\end{align*}
the {\bf\textit{tangent images}} at $A$ to the orbit of $A$ under the actions of $\mathcal{R}$, $\mathcal{G}_{l}$, $\mathcal{G}_{r}$, and $\mathcal{G}_{lr}$ on $M_{m,n}$, respectively. \\
Replacing $\mathfrak{m}\cdot\left\langle\frac{\partial A}{\partial x_\nu}\right\rangle$ by $R\cdot\left\langle\frac{\partial A}{\partial x_\nu}\right\rangle$ in the above definition we get the {\bf\textit {extended tangent images}} $\widetilde {T}^e_A(\mathcal{R}A)$, $\widetilde T^e_A(\mathcal{G}_lA)$, $\widetilde T^e_A(\mathcal{G}_rA)$, and $\widetilde T^e_A(\mathcal{G}_{lr}A)$ .
\end{Definition}

\vskip 7pt

Note that $\{\widetilde T_{jet_k(A)}\left(G^{(k)}jet_k(A)\right), \pi_k\}_k\subset M_{m,n}^{(k)}$ is an inverse system of $R$-modules, where $\pi_k$ is induced by the canonical projection $M_{m,n}^{(k)} \to M_{m,n}^{(k-1)}$, and we have
		$$\widetilde T_A(GA)=	
		\mathop {\lim }\limits_{\mathop {\longleftarrow} \limits_{k\ge 0} } \widetilde T_{jet_k(A)}\left(G^{(k)}jet_k(A)\right)\subset M_{m,n}.
		$$
		
Likewise, $\{T_{jet_k(A)}\left(G^{(k)}jet_k(A)\right), \pi_k\}_k\subset M_{m,n}^{(k)}$ is an inverse system of $K$-vector spaces and we call the $K$-vector space
		$$T_A(GA):=	
		\mathop {\lim }\limits_{\mathop {\longleftarrow} \limits_{k\ge 0} } T_{jet_k(A)}\left(G^{(k)}jet_k(A)\right)\subset M_{m,n}.
		$$
		the	{\bf tangent space} at $A$ to the orbit $GA$.

\begin{Remark}\rm	
We have inclusions $\widetilde T_{jet_k(A)}\left(G^{(k)}jet_k(A)\right) \subset T_{jet_k(A)}\left(G^{(k)}jet_k(A)\right)$ for the $k$-jets  and hence $\widetilde T_A(GA)\subset T_A(GA)$ with equality if the $k$-jets coincide for sufficiently big $k$. Below we give an example with $\widetilde T_A(GA) \ne T_A(GA)$.
\end{Remark}

Using Corollary \ref {Corollary2.3} we get

\begin{Lemma} \label{tangent space char 0} 
$\widetilde T_A(GA) = T_A(GA)$ if there is an integer $k$ such that the orbit map $o^{(l)}:G^{(l)}\to G^{(l)}jet_l(A)$ (over an algebraic closure of K) is separable for all $l\ge k$ (e.g. if $\characteristic(K)=0$). Conversely, $\widetilde T_A(GA) = T_A(GA)$ implies that $o^{(l)}$ is separable for all $l$.
 \end{Lemma}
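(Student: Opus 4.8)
The plan is to reduce both assertions to Corollary~\ref{Corollary2.3} --- which identifies separability of $o^{(l)}$ with the equality $\widetilde T_{jet_l(A)}(G^{(l)}jet_l(A)) = T_{jet_l(A)}(G^{(l)}jet_l(A))$ of tangent image and tangent space at the $l$-jet level --- combined with the elementary observation that an inverse limit is unchanged under passing to a cofinal subsystem of the index set. The first implication is then immediate: if $o^{(l)}$ is separable for all $l \ge k$, then by Corollary~\ref{Corollary2.3} the two inverse systems $\{\widetilde T_{jet_l(A)}(G^{(l)}jet_l(A))\}_l$ and $\{T_{jet_l(A)}(G^{(l)}jet_l(A))\}_l$ have the same terms and the same transition maps $\pi_l$ for $l \ge k$, hence the same inverse limit, i.e.\ $\widetilde T_A(GA) = T_A(GA)$. (For $\characteristic(K)=0$ one may take $k=0$, by the last statement of Corollary~\ref{Corollary2.3}.)

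For the converse, assume $\widetilde T_A(GA) = T_A(GA)$. Two facts are needed. First, by Proposition~\ref{tangent space}, together with the Artin--Rees lemma and the completeness of $R$, the tangent image $\widetilde T_A(GA)$ is the finitely generated $R$-module of Definition~\ref{tangent space for groups} and $\widetilde T_{jet_l(A)}(G^{(l)}jet_l(A))$ is simply its image in $M_{m,n}^{(l)}$; consequently the transition maps of the system $\{\widetilde T_{jet_l(A)}(G^{(l)}jet_l(A))\}_l$ are surjective and the projection $\widetilde T_A(GA) \to \widetilde T_{jet_l(A)}(G^{(l)}jet_l(A))$ is onto, for every $l$. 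Second, every $T_{jet_l(A)}(G^{(l)}jet_l(A))$ is finite dimensional over $K$, so the descending chain of subspaces $\mathrm{im}\,(T_{jet_{l'}(A)}(G^{(l')}jet_{l'}(A)) \to T_{jet_l(A)}(G^{(l)}jet_l(A)))$, $l' \ge l$, stabilizes (the Mittag--Leffler condition holds automatically), and its stable value equals the image of $T_A(GA)$ under the projection to the $l$-th term. Combining the two facts with the hypothesis $\widetilde T_A(GA)=T_A(GA)$ shows this stable value to be exactly $\widetilde T_{jet_l(A)}(G^{(l)}jet_l(A))$. It remains to show that the stable value is in fact all of $T_{jet_l(A)}(G^{(l)}jet_l(A))$ --- equivalently, that for $l' \gg l$ the $G^{(l')}$-equivariant truncation of orbits $G^{(l')}jet_{l'}(A) \to G^{(l)}jet_l(A)$ induces a surjection on Zariski tangent spaces at the base points. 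Once this is known for all $l$, we get $\widetilde T_{jet_l(A)}(G^{(l)}jet_l(A)) = T_{jet_l(A)}(G^{(l)}jet_l(A))$, and Corollary~\ref{Corollary2.3} gives separability of $o^{(l)}$ for all $l$.

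The main obstacle is exactly this surjectivity, i.e.\ the smoothness (equivalently, separability) of the orbit-truncation morphisms $G^{(l')}jet_{l'}(A) \to G^{(l)}jet_l(A)$. Being $G^{(l')}$-equivariant and surjective, such a morphism is smooth iff its fibre over $jet_l(A)$ is smooth, and that fibre is the orbit of $jet_{l'}(A)$ under the preimage in $G^{(l')}$ of $\mathrm{Stab}_{G^{(l)}}(jet_l(A))$; in positive characteristic this stabilizer, and hence the group acting on the fibre, may well be non-reduced --- precisely the phenomenon behind Example~\ref{surjective tangent map}. I would attack the fibre by peeling off the kernel of the truncation $G^{(l')} \to G^{(l)}$: by Lemma~\ref{expansion} this kernel acts on $jet_{l'}(A)$ by affine maps assembled from $\partial A/\partial x_\nu$ (and, for $\mathcal{G}_l, \mathcal{G}_r, \mathcal{G}_{lr}$, from $E_{m,pq}A$ and $AE_{n,hl}$), with affine-subspace orbits, so the fibre is an affine-space bundle over the orbit of $jet_{l'}(A)$ under $\mathrm{Stab}_{G^{(l)}}(jet_l(A))$; the delicate point is to control the non-reduced part of this stabilizer. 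This is cleanest for $l$ large, where one should expect to conclude separability of $o^{(l)}$ in general; handling every $l$ is exactly where the positive-characteristic subtleties must be faced.
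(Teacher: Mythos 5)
Your first implication is correct and coincides with the paper's (implicit) argument: Corollary \ref{Corollary2.3} gives $\widetilde T_{jet_l(A)}\bigl(G^{(l)}jet_l(A)\bigr) = T_{jet_l(A)}\bigl(G^{(l)}jet_l(A)\bigr)$ for all $l\ge k$, and since the two inverse systems then agree on the cofinal set of indices $l\ge k$, their limits coincide; this is exactly the observation made in the remark preceding the lemma.

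The converse, however, is not proved. Write $\widetilde T_l$ and $T_l$ for the tangent image and the tangent space at the $l$-jet level. Your reduction is sound as far as it goes: surjectivity of $\widetilde T_A(GA)\to \widetilde T_l$ (because $\widetilde T_l$ is the image in $M_{m,n}^{(l)}$ of the finitely generated $R$-module of Definition \ref{tangent space for groups}), together with the Mittag--Leffler property of the finite-dimensional system $\{T_l\}$, shows that the hypothesis $\widetilde T_A(GA)=T_A(GA)$ forces the stable image $S_l=\bigcap_{l'\ge l}\mathrm{im}\bigl(T_{l'}\to T_l\bigr)$ to equal $\widetilde T_l$ for every $l$. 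But the step you label as ``remaining'' --- that $S_l=T_l$, i.e.\ that the truncation morphisms of orbits $G^{(l')}jet_{l'}(A)\to G^{(l)}jet_l(A)$ induce surjections on tangent spaces at the base points --- is not a deferrable technicality: it is a separability assertion of precisely the same nature as the conclusion you are after, and in positive characteristic a surjective equivariant morphism of homogeneous spaces need not be separable (this is the very phenomenon exhibited in Example \ref{surjective tangent map}). Your sketched attack (equivariance reduces to smoothness of the fibre, the kernel of $G^{(l')}\to G^{(l)}$ acts with affine-space orbits, the trouble sits in a possibly non-reduced stabilizer) correctly locates the difficulty but, as you yourself concede, does not resolve it. So what you have actually established is only $\widetilde T_l=S_l$ for all $l$, which is strictly weaker than separability of $o^{(l)}$ unless the tangent-level surjectivity of the orbit truncations is supplied; a complete proof must either establish that surjectivity or find an argument that bypasses the comparison of finite jet levels altogether.
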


In the following, when we say that the orbit map is separable, we mean that this holds over an algebraic closure of $K$. Also the dimension of a $K$--variety is its dimension over an algebraic closure. 

\begin{Example}\label{surjective tangent map}\rm
We give examples where the tangent image is strictly contained in the tangent space, i.e. the tangent map 
$$T_eG^{(k)}  \to T_{jet_k(A)}\left(G^{(k)}jet_k(A)\right)\label{tangent map}$$
is not surjective and hence the orbit map $G^{(k)}\to G^{(k)}jet_k(f)$ is not separable. 

	\begin{enumerate}

		\item Let $\characteristic(K)=p>0$ and $k\ge p$. Let the right group $G=\mathcal{R}$ act on $K[[x_1,\ldots, x_s]]$ and let $f=x_1^p+\ldots+x_s^p$. Then ${\rm{j}}(f)=0$ so that the tangent image  $\mathfrak{m} \cdot {\rm{j}}(f)$ is zero. On the other hand, for $\phi\in \mathcal{R}$, $\phi=(ax_1, x_2,\ldots, x_s)$ where $a\ne 0$ and $a^p-1\ne 0$, we have
	$$jet_k(f\circ\phi)=a^px_1^p+x_2^p+\cdots+x_s^p\ne x_1^p+x_2^p+\cdots+x_s^p=jet_k(f),$$
showing that the orbit of $jet_k(f)$ and hence its tangent space has dimension $\ge 1$. Note that $f$ is not finitely $\mathcal{R}$-determined.

		\item Let $\characteristic(K)=2$, $f = x^2 + y^3$ and let the contact group $G=\mathcal{G}_l$ act on $R=K[[x,y]]$. We compute:

\begin{itemize}
	\item $f$ is $\mathcal{G}_l$ 4-determined

	\item the tangent image $\widetilde T_{f}(\mathcal{G}_l f) = \langle f \rangle + \mathfrak{m} \cdot {\rm{j}}(f)$ is $\langle x^2,xy^2,y^3 \rangle$

	\item its 4-jet $\widetilde T_{jet_4(f)}\left(\mathcal{G}_l^{(4)}  jet_4(f)\right)$ has dimension 10 in $R^{(4)} = R/ \mathfrak{m}^5$

	\item the group $\mathcal{G}_l^{(4)}$ has dimension 43 and the stabilizer of $jet_4(f)$ has dimension 32 in  $R^{(4)}$.

\end{itemize}

It follows that the orbit $\mathcal{G}_l^{(4)} jet_4(f)$ has dimension 11 in  $R^{(4)}$. Since the tangent image has dimension 10, $\widetilde T_{f}(\mathcal{G}_l  f) \ne T_{f}(\mathcal{G}_l  f)$ by Corollary \ref {Corollary2.3}. 

\item  Similar examples exist in other positive characteristics. E.g. we have in characteristic 3 for $f = x^3 + y^4$ that the dimension in $R^{(5)}$ of the $\mathcal{G}_l$--tangent image is 11 while that of the tangent space is 12. 

On the other hand we can show that for $f = x^2 + y^3$ and $G=\mathcal{G}_l$ the tangent image coincides with the tangent space in characteristic 3 and for $G=\mathcal{R}$ also in characteristic 2, implying that in these cases the orbit map is separable. 
	
	\end{enumerate}
	
\end{Example}

The computations were done by using SINGULAR (cf. \cite {DGPS15}). The determinacy (using Theorem {\ref{finite determinacy}}) and the tangent image of the orbit map are easily computable by using standard bases in the local ring R (see \cite {GP07}). To compute the tangent space to the orbit we need to compute equations of the orbit, which is possible but much harder, as in general a large number of group variables has to be eliminated. The dimension of the orbit is usually easier obtained by computing the stabilizer. Note that the dimension of a variety computed by standard resp. Gr\"obner bases refers to the dimension over the algebraic closure. Details of the computational part will appear elsewhere.

\begin{Remark} \label{gap} \rm The examples above show that the statement of Proposition 1 in \cite {BGM12} is wrong in positive characteristic for the right group as well as for the contact group, for the latter even if $f$ is finitely determined. Hence the proof of the ``if" part of Theorem 5 in \cite {BGM12} contains a gap. However, this gap can be closed as we show in our paper \cite {GP16} by proving a more general theorem for complete intersections.

\end{Remark}

We finish this section with an obvious necessary condition for finite $G$-determinacy:

\begin{Lemma} If $A$ is finitely $G$-determined then
		$$\dim_K M_{m,n}/T_A(GA)<\infty.$$
 \end{Lemma}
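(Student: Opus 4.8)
The plan is to show the contrapositive: if $\dim_K M_{m,n}/T_A(GA) = \infty$, then $A$ fails to be $G$ $k$-determined for every $k$. So fix $k \geq 0$ arbitrary. Since $A$ is $G$ $k$-determined means precisely that the orbit $G^{(k)} jet_k(A)$ contains the whole coset $jet_k(A) + \mathfrak{m}^{k+1}M_{m,n}/\mathfrak{m}^{k+1}M_{m,n}$, an infinite-codimension obstruction must eventually obstruct this. First I would recall that the orbit $G^{(l)} jet_l(A)$ is a locally closed subvariety of the affine space $M_{m,n}^{(l)}$ whose dimension is bounded above by $\dim_{K'} T_{jet_l(A)}(G^{(l)} jet_l(A))$ (the tangent space dimension at a point always dominates the local dimension of a variety). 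On the other hand, if $A$ were $G$ $k$-determined, then for every $l \geq k$ the orbit $G^{(l)} jet_l(A)$ would contain the affine subspace $jet_l(A) + \mathfrak{m}^{k+1}M_{m,n}/\mathfrak{m}^{l+1}M_{m,n}$, whose dimension is $\dim_K \mathfrak{m}^{k+1}M_{m,n}/\mathfrak{m}^{l+1}M_{m,n}$, and this grows without bound as $l \to \infty$.

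The key step is then to combine these two facts. If $A$ is $G$ $k$-determined, then for all $l \geq k$,
\[
\dim_K \frac{\mathfrak{m}^{k+1}M_{m,n}}{\mathfrak{m}^{l+1}M_{m,n}} \;\leq\; \dim_{K'} T_{jet_l(A)}\bigl(G^{(l)} jet_l(A)\bigr) \;=\; \dim_K T_{jet_l(A)}\bigl(G^{(l)} jet_l(A)\bigr),
\]
using Remark \ref{field extension}(2) for the last equality. Subtracting from $\dim_K M_{m,n}^{(l)}$ and noting that the left side has codimension $\dim_K M_{m,n}^{(k)}$ inside $M_{m,n}^{(l)}$ (a bound independent of $l$), we obtain
\[
\dim_K \frac{M_{m,n}^{(l)}}{T_{jet_l(A)}\bigl(G^{(l)} jet_l(A)\bigr)} \;\leq\; \dim_K M_{m,n}^{(k)}
\]
for all $l \geq k$. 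Passing to the inverse limit over $l$, the quotients $M_{m,n}^{(l)}/T_{jet_l(A)}(G^{(l)} jet_l(A))$ form an inverse system of finite-dimensional $K$-vector spaces of bounded dimension, so their limit $M_{m,n}/T_A(GA)$ is finite-dimensional (in fact $\dim_K M_{m,n}/T_A(GA) \leq \dim_K M_{m,n}^{(k)}$), contradicting the hypothesis.

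I expect the main subtlety to be the bookkeeping that the orbit genuinely \emph{contains} the relevant affine subspace once $A$ is $G$ $k$-determined — one must check that $G$ $k$-determinacy of $A$ forces $G^{(l)} jet_l(A) \supseteq jet_l(B)$ for every $B$ with $B - A \in \mathfrak{m}^{k+1}M_{m,n}$, which is immediate from the definition since $B \sim^G A$ implies $jet_l(B) \in G^{(l)} jet_l(A)$, and every element of $jet_l(A) + \mathfrak{m}^{k+1}M_{m,n}/\mathfrak{m}^{l+1}M_{m,n}$ is such a $jet_l(B)$. A minor point is ensuring the dimension comparison $\dim_x Y \leq \dim T_x Y$ is applied at a smooth point or, more robustly, that the containment of an affine subspace of dimension $d$ inside a variety forces that variety to have dimension $\geq d$ at a generic point of that subspace, hence its tangent space there has dimension $\geq d$; combined with homogeneity of the orbit under the group action this transfers the bound to the tangent space at $jet_l(A)$ itself.
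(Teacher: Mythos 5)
Your argument is correct, but it takes a noticeably more roundabout route than the paper's. Both proofs rest on the same observation, namely that $k$-determinacy forces the orbit $G^{(l)}jet_l(A)$ to contain the whole affine coset $jet_l(A)+\mathfrak{m}^{k+1}M_{m,n}/\mathfrak{m}^{l+1}M_{m,n}$; the paper, however, exploits this pointwise: for each $B\in\mathfrak{m}^{l}M_{m,n}$ with $l>k$ the line $\{A+tB\,:\,t\in K\}$ lies in $GA$, so $jet_l(B)$ is a tangent vector to the orbit at $jet_l(A)$, which yields the inclusion $\mathfrak{m}^{l}M_{m,n}\subset T_A(GA)$ directly and hence finite codimension. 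You instead convert the containment into a dimension count: the orbit has dimension at least that of the coset, smoothness and homogeneity of the orbit transfer this to $\dim_K T_{jet_l(A)}(G^{(l)}jet_l(A))$ via Remark \ref{field extension}, giving the uniform codimension bound $\dim_K M_{m,n}^{(k)}$, and then an inverse-limit argument finishes. This buys an explicit numerical bound $\dim_K M_{m,n}/T_A(GA)\le\dim_K M_{m,n}^{(k)}$, but costs extra machinery; note also that $M_{m,n}/T_A(GA)$ is a priori only a subspace of $\varprojlim\bigl(M_{m,n}^{(l)}/T_{jet_l(A)}(G^{(l)}jet_l(A))\bigr)$ rather than equal to it --- the cleanest phrasing is that the preimages $\pi_l^{-1}\bigl(T_{jet_l(A)}(G^{(l)}jet_l(A))\bigr)$ form a decreasing chain of subspaces of $M_{m,n}$ of codimension at most $\dim_K M_{m,n}^{(k)}$, so their intersection $T_A(GA)$ has the same codimension bound. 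The paper's version is shorter and yields the stronger, directly reusable statement $\mathfrak{m}^{k+2}M_{m,n}\subset T_A(GA)$. One caveat is common to both arguments: passing from $K$-rational points contained in the orbit to Zariski-tangent vectors (or to a lower bound on the dimension of the orbit over an algebraic closure) tacitly uses that the $K$-points of the line, respectively of the coset, are Zariski dense, which requires $K$ to be infinite; the paper's own proof glosses over this in exactly the same way, so you are not worse off.
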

		Indeed, let $A$ be $k$-determined, $l>k$, and $B\in \mathfrak{m}^{l}\cdot M_{m,n}$. Then $A+tB\in GA$ for all $t\in K$. This yields $jet_l(B)\in T_{jet_l(A)}\left(G^{(l)}jet_l(A)\right)$ so that
		$B\in T_A(GA).$
		This means that $\mathfrak{m}^l\cdot M_{m,n}\subset T_A(GA)$, and thus the claim follows.

% % % % % % % % % % % % % % % % % % % % % % % % % % % % % % % % % % % % 
\section{A sufficient condition in terms of the tangent image}\label{sufficient condition 1}

In this section we establish a sufficient condition for finite G-determinacy of matrices in terms of the tangent image defined in section \ref{tangent image}.

\begin{Lemma}{\label{lemma}}
	Let $A, B\in M_{m,n}$.
	\begin{enumerate}
		\item\label{lemma1} If $A\mathop  \sim \limits^{\mathcal{R}} B$, i.e. 
		there is an automorphism $\phi\in Aut(R)$ such that $B=\phi(A)$, then, for the submodules of $M_{m,n}$ generated by the partials, we have 
		$$\widetilde T^e_B(\mathcal{R}B)=\phi \left(\widetilde T^e_A(\mathcal{R}A)\right).$$
		\item If $A\mathop  \sim \limits^{\mathcal{G}_{lr}}B$, i.e. there are  invertible matrices $U\in GL(m,R)$,  $V\in GL(n,R)$, and  an automorphism $\phi\in Aut(R)$ such that $B=U\cdot\phi(A)\cdot V$, then 
		\begin{align*} 
		&\widetilde T^e_B(\mathcal{G}_{lr}B)=U\cdot\phi \left(\widetilde T^e_A(\mathcal{G}_{lr}A)\right)\cdot V.
		\end{align*} 
		The same holds for $\widetilde T_B(\mathcal{G}_{lr}B)$ and $\widetilde T_A(\mathcal{G}_{lr}A)$ instead of $\widetilde T^e_B(\mathcal{G}_{lr}B)$ and  $\widetilde T^e_A(\mathcal{G}_{lr}A)$. A similar result holds for $\mathcal{G}_{l}$ and $\mathcal{G}_{r}$.
	\end{enumerate}
\end{Lemma}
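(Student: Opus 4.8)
The plan is to reduce everything to a direct computation using the explicit description of the tangent images from Definition \ref{tangent space for groups}, treating the $\mathcal{R}$-case first and then building the $\mathcal{G}_{lr}$-case on top of it. For part (1), suppose $B = \phi(A)$ with $\phi \in \operatorname{Aut}(R)$. I would write $B = [b_{ij}]$ with $b_{ij} = \phi(a_{ij})$ and apply the chain rule for formal power series: $\frac{\partial b_{ij}}{\partial x_\nu} = \sum_{\mu=1}^s \phi\!\left(\frac{\partial a_{ij}}{\partial x_\mu}\right)\cdot \frac{\partial \phi_\mu}{\partial x_\nu}$, where $\phi_\mu = \phi(x_\mu)$. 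In matrix form this reads $\frac{\partial B}{\partial x_\nu} = \sum_{\mu} \phi\!\left(\frac{\partial A}{\partial x_\mu}\right)\cdot J_{\mu\nu}$ with $J = \left[\frac{\partial \phi_\mu}{\partial x_\nu}\right]$ the Jacobian matrix of $\phi$. The key point is that $\phi$ being an automorphism of $R$ forces $J$ to be invertible over $R$ (its constant term is the invertible linear part of $\phi$), so over $R$ the families $\left\{\frac{\partial B}{\partial x_\nu}\right\}_\nu$ and $\left\{\phi\!\left(\frac{\partial A}{\partial x_\mu}\right)\right\}_\mu$ generate the same $R$-submodule of $M_{m,n}$. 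Hence $R\cdot\left\langle \frac{\partial B}{\partial x_\nu}\right\rangle = \phi\!\left(R\cdot\left\langle \frac{\partial A}{\partial x_\nu}\right\rangle\right)$, which is exactly $\widetilde T^e_B(\mathcal{R}B) = \phi\!\left(\widetilde T^e_A(\mathcal{R}A)\right)$; note $\phi(\mathfrak{m}^k M_{m,n}) = \mathfrak{m}^k M_{m,n}$ since $\phi$ preserves $\mathfrak{m}$, which also gives the non-extended statement $\widetilde T_B(\mathcal{R}B) = \phi\!\left(\widetilde T_A(\mathcal{R}A)\right)$ after multiplying by $\mathfrak{m}$.

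For part (2), write $B = U\cdot\phi(A)\cdot V$. I would handle the three types of generators of $\widetilde T^e_A(\mathcal{G}_{lr}A)$ separately. The left generators: $\langle E_{m,pq}\cdot B\rangle = \langle E_{m,pq}\cdot U \cdot \phi(A)\cdot V\rangle$; since $U$ is invertible, $\{E_{m,pq}U\}_{p,q}$ and $\{E_{m,pq}\}_{p,q}$ span the same $R$-module of $m\times m$ matrices, so $\langle E_{m,pq}\cdot B\rangle = \langle E_{m,pq}\cdot\phi(A)\cdot V\rangle = \left(\langle E_{m,pq}\cdot\phi(A)\rangle\right)\cdot V = U\cdot\phi\!\left(\langle E_{m,pq}\cdot A\rangle\right)\cdot V$, the last equality because right-multiplication by the invertible $V$ and conjugation/left-multiplication are $R$-linear bijections and $\langle E_{m,pq}\cdot\phi(A)\rangle = \phi\!\left(\langle E_{m,pq}\cdot A\rangle\right)$ trivially as $\phi$ acts entrywise and the $E_{m,pq}$ have scalar entries; similarly $U\cdot\langle\,\cdot\,\rangle = \langle\,\cdot\,\rangle\cdot$ after absorbing $U$ on the left — more carefully, $U\cdot\phi(\langle E A\rangle)\cdot V = \langle U E \phi(A) V\rangle$ and since $\{UE\} = \{E'\}$ spans the same thing, this is $\langle E'\cdot(U\phi(A)V)\rangle$... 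I would be careful to run the equalities in the direction that manifestly gives $U\cdot\phi(\langle E_{m,pq}A\rangle)\cdot V$. The right generators $\langle B\cdot E_{n,hl}\rangle$ are treated symmetrically using invertibility of $V$. The partials: by the chain-rule computation above applied to $\phi(A)$, then the product rule $\frac{\partial B}{\partial x_\nu} = \frac{\partial U}{\partial x_\nu}\phi(A)V + U\frac{\partial \phi(A)}{\partial x_\nu}V + U\phi(A)\frac{\partial V}{\partial x_\nu}$, so modulo $\langle E_{m,pq}B\rangle + \langle B E_{n,hl}\rangle$ (which absorb the first and third summands since $\frac{\partial U}{\partial x_\nu}\phi(A)V = (\frac{\partial U}{\partial x_\nu}U^{-1})\cdot B$ lies in $\langle E_{m,pq}B\rangle$, and likewise the third), one has $\frac{\partial B}{\partial x_\nu} \equiv U\cdot\frac{\partial \phi(A)}{\partial x_\nu}\cdot V$, and then the invertible Jacobian argument converts $\left\{U\frac{\partial\phi(A)}{\partial x_\nu}V\right\}_\nu$ into $\left\{U\phi\!\left(\frac{\partial A}{\partial x_\mu}\right)V\right\}_\mu = U\cdot\phi\!\left(\left\langle\frac{\partial A}{\partial x_\mu}\right\rangle\right)\cdot V$. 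Summing the three contributions gives $\widetilde T^e_B(\mathcal{G}_{lr}B) = U\cdot\phi\!\left(\widetilde T^e_A(\mathcal{G}_{lr}A)\right)\cdot V$. For the non-extended version, multiply by $\mathfrak{m}$ and use $\phi(\mathfrak{m}) = \mathfrak{m}$ together with the observation that $U\cdot\mathfrak{m}\langle\cdots\rangle\cdot V = \mathfrak{m}\cdot U\langle\cdots\rangle V$ as $U,V$ have entries in $R$ and are invertible. The cases $\mathcal{G}_l$ (drop the right generators, $V = I_n$) and $\mathcal{G}_r$ (drop the left ones) are immediate specializations.

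The main obstacle is the bookkeeping in the partials computation for $\mathcal{G}_{lr}$: one must verify that the "error terms" $\frac{\partial U}{\partial x_\nu}\cdot\phi(A)\cdot V$ and $U\cdot\phi(A)\cdot\frac{\partial V}{\partial x_\nu}$ genuinely lie in $\langle E_{m,pq}\cdot B\rangle$ and $\langle B\cdot E_{n,hl}\rangle$ respectively — this uses that $U^{-1}, V^{-1}$ have entries in $R$ so that $\frac{\partial U}{\partial x_\nu}\cdot U^{-1} \in M_m$ and hence $\frac{\partial U}{\partial x_\nu}\phi(A)V = \left(\frac{\partial U}{\partial x_\nu}U^{-1}\right)B \in \langle E_{m,pq}B\rangle$, and symmetrically on the right — and that these modules are already present in the tangent image so the congruence does not change it. A secondary technical point, which I would state as a small lemma or inline remark, is the invertibility of the Jacobian matrix of a formal automorphism $\phi$ over $R$: since $\phi$ is invertible, $D\phi(0)$ is an invertible $s\times s$ matrix over $K$, so $J = \left[\frac{\partial \phi_\mu}{\partial x_\nu}\right]$ has invertible constant term and is therefore a unit in $M_s(R) = \operatorname{Mat}(s,s,R)$; this is what legitimizes replacing the generating family $\left\{\frac{\partial B}{\partial x_\nu}\right\}$ by $\left\{\phi\!\left(\frac{\partial A}{\partial x_\mu}\right)\right\}$ without shrinking or enlarging the module. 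Everything else is routine $R$-linear algebra.
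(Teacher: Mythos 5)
Your proposal is correct and follows exactly the route the paper intends: the paper's own proof is the one-line remark that part (1) is the chain rule applied to the entries of $B$ and part (2) additionally uses the product rule, and your write-up is a faithful, correctly executed expansion of that (including the two points the paper leaves implicit, namely the invertibility over $R$ of the Jacobian matrix of $\phi$ and the absorption of the terms $\frac{\partial U}{\partial x_\nu}\cdot U^{-1}\cdot B$ and $B\cdot V^{-1}\cdot\frac{\partial V}{\partial x_\nu}$ into $\langle E_{m,pq}\cdot B\rangle$ and $\langle B\cdot E_{n,hl}\rangle$). No gaps.
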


\begin{proof}
The proof of 1. is an application of the chain rule to the entries of $B$ and for 2. use in addition the product rule.
\end{proof}

Our main result is the following theorem.

\begin{Theorem}{\label{finite determinacy}}
	Let $A=[a_{ij}]\in \mathfrak{m}\cdot M_{m,n}$ and let  $o:=\order(A)$. If there is some integer $k\ge 0$ such that
	\begin{align}
	\mathfrak{m}^{k+2}\cdot M_{m,n}\subset \mathfrak{m}\cdot \widetilde T_A(GA)\label{main},
	\end{align}
	then A is G $(2k-o+2)$-determined.
\end{Theorem}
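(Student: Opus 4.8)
The plan is to prove the theorem by the classical ``homotopy'' or ``path'' method adapted to positive characteristic, using the explicit description of the tangent image from Proposition \ref{tangent space} in place of the (possibly false) identification of tangent space with orbit image. Fix $A\in\mathfrak{m}\cdot M_{m,n}$ with $o=\order(A)$ and $k\ge 0$ satisfying \eqref{main}. Let $B\in M_{m,n}$ with $B-A\in\mathfrak{m}^{2k-o+3}\cdot M_{m,n}$; we must show $B\mathop\sim\limits^{G}A$. First I would introduce the auxiliary family $A_t:=A+t(B-A)\in M_{m,n}[[t]]$ for a new variable $t$, and aim to produce an element $g_t\in G$ depending (power-series) on $t$ with $g_0=\id$ and $g_t(A_t)=A$ for all $t$; specialising $t=1$ then gives $g_1(A)=B$, i.e. $A\mathop\sim\limits^{G}B$. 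Since $G$ is only a group over $R$, not over $R[[t]]$, the honest formulation is to work jet-by-jet: fix $N:=2k-o+2$, work in $M_{m,n}^{(N)}$, and show by an inductive ``integration'' over $t$ that $jet_N(A_t)$ lies in a single $G^{(N)}$-orbit; this is where the fact that $G^{(N)}$ is an algebraic group acting algebraically (as recalled in Section \ref{tangent image}) is used to pass from the infinitesimal statement to the orbit statement.

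The heart of the argument is the infinitesimal step. Differentiating the desired relation $g_t(A_t)=A$ formally in $t$ leads one to need, at each $t$, that $\frac{\partial A_t}{\partial t}=B-A$ lies in the tangent image $\widetilde T_{A_t}(GA_t)$ to the orbit of $A_t$ \emph{computed with $R$-coefficients}, i.e. that there exist $U\in M_m$, $V\in M_n$ and $\phi_\nu\in\mathfrak{m}$ (for $\mathcal{G}_{lr}$; fewer for the other groups) with
\begin{align}
B-A = U\cdot A_t + A_t\cdot V + \sum_{\nu=1}^s \phi_\nu\cdot\frac{\partial A_t}{\partial x_\nu} \pmod{\mathfrak{m}^{N+2}\cdot M_{m,n}},\notag
\end{align}
and moreover with the $\mathfrak{m}$-adic orders of $U,V,\phi_\nu$ controlled so that the resulting flow converges $\mathfrak{m}$-adically and preserves the condition $g_0=\id$. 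The key input is hypothesis \eqref{main}: since $\widetilde T_{A_t}(GA_t)$ and $\widetilde T_A(GA)$ agree modulo terms of order $\ge 2k-o+3$ (because $A_t-A\in\mathfrak{m}^{2k-o+3}\cdot M_{m,n}$ and the generators $E_{m,pq}A$, $A E_{n,hl}$, $\partial A/\partial x_\nu$ of the tangent image change only in that order, the partials dropping the order by at most one — here the bound $2k-o+2$ and the $\mathfrak{m}\cdot$ in \eqref{main} are exactly calibrated), Nakayama's lemma (in the $R$-module $M_{m,n}/\mathfrak{m}^{N+2}M_{m,n}$, a Noetherian local ring module) upgrades $\mathfrak{m}^{k+2}M_{m,n}\subset\mathfrak{m}\cdot\widetilde T_A(GA)$ to $\mathfrak{m}^{k+2}M_{m,n}\subset\widetilde T_{A_t}(GA_t)+\mathfrak{m}^{N+2}M_{m,n}$ and lets one solve the equation above with $B-A\in\mathfrak{m}^{2k-o+3}\subset\mathfrak{m}^{k+2}\cdot\mathfrak{m}^{k-o+1}\subset\mathfrak{m}\cdot\widetilde T_A(GA)\cdot\mathfrak{m}^{k-o+1}$, gaining the extra factor $\mathfrak{m}^{k-o+1}$ that forces $U,V,\phi_\nu$ to have positive order and thus makes the integration meaningful.

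Concretely I would organise the ``integration'' as a successive approximation: construct $g_t$ as an $\mathfrak{m}$-adically convergent product $\cdots g_t^{(2)}g_t^{(1)}$, where $g_t^{(1)}$ is chosen so that $g_t^{(1)}(A_t)-A\in\mathfrak{m}^{2k-o+4}M_{m,n}[[t]]$, then $g_t^{(2)}$ corrects the next order, and so on, each step solving a linear equation of the type above with right-hand side of strictly increasing order; one checks the orders of the correction terms $U,V,\phi_\nu$ grow, so the infinite product converges in $\mathrm{GL}(m,R[[t]])\times\mathrm{GL}(n,R[[t]])\times\mathrm{Aut}(R[[t]])$ and its $t=1$ value lies in $G$. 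Alternatively, and perhaps more cleanly, I would fix the jet level $N$ from the start, note that then only finitely many steps are needed, and invoke the algebraicity of the $G^{(N)}$-action together with a standard argument (e.g. as in \cite{WA05}) that a connected algebraic family $jet_N(A_t)$ all of whose tangent vectors $\frac{\partial}{\partial t}jet_N(A_t)$ lie in the tangent image at each point must stay in one orbit — but one must be careful that this last argument genuinely uses the tangent \emph{image}, not the full Zariski tangent space, which is why \eqref{main} and not merely finite codimension of $\widetilde T_A(GA)$ appears.

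The main obstacle, and the place where positive characteristic really bites, is the bookkeeping of $\mathfrak{m}$-adic orders in the inductive step: one must verify that the loss of one degree coming from the partial derivatives $\partial A/\partial x_\nu$ is exactly compensated by the $\mathfrak{m}\cdot$ in the hypothesis and by the gap between $2k-o+3$ (the order of $B-A$) and $k+2$, so that the correction group elements have positive order and the process converges; getting the determinacy bound $2k-o+2$ rather than something weaker requires this accounting to be tight. A secondary subtlety is that in characteristic $p$ one cannot use Taylor expansion to linearise $a_{ij}(\mathbf{x}+\mathbf{z})$, but Lemma \ref{expansion} (applied with $\mathbf z=\epsilon\cdot\phi(\mathbf x)$, so $\epsilon^2=0$ kills all higher terms) already does exactly what is needed at the infinitesimal level, and this is precisely why the computation in Proposition \ref{tangent space} produced $\sum_\nu\phi_\nu\,\partial A/\partial x_\nu$ with no characteristic restriction; the proof of the theorem should simply propagate that linearised identity through the approximation scheme.
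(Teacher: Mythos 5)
Your concrete plan --- successive approximation by group elements of strictly increasing $\mathfrak{m}$-adic order, with convergence of the resulting infinite product in $GL(m,R)\times GL(n,R)\times Aut(R)$ --- is in substance the paper's own proof, and the order calibration you identify is carried out there exactly as you sketch: with $N=2k-o+2$ and $Q:=N-k=k-o+2\ge 1$ one has $B-A\in\mathfrak{m}^{N+1}\cdot M_{m,n}\subset \mathfrak{m}^{Q}\langle E_{m,pq}A\rangle+\mathfrak{m}^{Q}\langle AE_{n,hl}\rangle+\mathfrak{m}^{Q+1}\left\langle\frac{\partial A}{\partial x_\nu}\right\rangle$, which gives correction elements of positive order. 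The paper dispenses with the homotopy parameter $t$ altogether: it iterates directly on $B$, producing $A_{t}=U_{t}\phi_{t}(A_{t-1})V_{t}$ with $B-A_t\in\mathfrak{m}^{N+t+1}\cdot M_{m,n}$, and it propagates the hypothesis \eqref{main} to each $A_t$ via the equivariance of the tangent image under the group action (Lemma \ref{lemma}) rather than via your perturbation-plus-Nakayama argument; the latter could be made to work but is only written out by you for the first step, whereas the equivariance statement settles all steps at once because $\order(A_t)=\order(A)$.

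Two points in your write-up are genuine problems. First, your closing claim that the $\epsilon^2=0$ linearisation ``simply propagates'' through the approximation scheme misplaces where the difficulty lives: in the iteration the substitutions $x_\nu\mapsto x_\nu+d_{t,\nu}$ are \emph{not} infinitesimal, so the terms of Lemma \ref{expansion} with $|\gamma|\ge 2$ do not vanish and must be estimated. The inequality $|\alpha|-|\gamma|+(Q+1)|\gamma|=|\gamma|Q+|\alpha|\ge 2Q+o=N+2$ is precisely what pushes this remainder into $\mathfrak{m}^{N+2}\cdot M_{m,n}$, and it is the sole source of the exponent $2k-o+2$; without it the induction does not close and no determinacy bound comes out. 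Second, your ``perhaps cleaner'' alternative --- a Mather-type lemma asserting that a connected family $jet_N(A_t)$ whose velocity vectors lie in the tangent image at each point stays in a single $G^{(N)}$-orbit --- is not available here: the algebraic versions of that lemma conclude orbit membership from tangent-\emph{space} conditions and require separability of the orbit map, which is exactly what Example \ref{surjective tangent map} shows can fail in positive characteristic. Keep only the approximation scheme, add the remainder estimate and the equivariance step explicitly, and you have a complete proof.
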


\begin{proof}
	If $A$ is the zero matrix, it is not finitely $G$-determined and the statement is true. Hence, we may assume that $A\ne 0$. Assume that
	$$\mathfrak{m}^{k+2}\cdot M_{m,n}\subset \mathfrak{m}\cdot\langle E_{m, pq}\cdot A\rangle+\mathfrak{m}\cdot\langle A\cdot E_{n, hl}\rangle+\mathfrak{m}^2\cdot\left\langle \frac{\partial A}{\partial x_\nu}\right\rangle.$$

	For all $i=1,\ldots,m$ and $j=1,\ldots,n$, set $o_{ij}:=\order(a_{ij})$. Choose  $h\in\{1,\ldots,m\}$ and  $l\in\{1,\ldots,n\}$ such that $o=\order(a_{hl})$. Looking at $E_{hl}$ in $M_{m,n}$, the hypothesis implies that

	$$\mathfrak{m}^{k+2}\subset\mathfrak{m}\cdot\left\langle a_{1l}, a_{2l},\ldots, a_{ml}, a_{h1}, a_{h2},\ldots, a_{hn}\right\rangle+\mathfrak{m}^2\cdot {\rm j}(a_{hl})\subset \mathfrak{m}^{o+1}.$$

	This yields $k\ge o-1$. Set $N:=2k-o+2\ge k+1$. Let $B\in M_{m,n}$ be such that $B-A\in \mathfrak{m}^{N+1}\cdot M_{m,n}$. We show that $B\mathop  \sim \limits^{G} A$. To prove this, we will construct inductively sequences of matrices $\{X_t\}_{t\ge 0}\subset GL(m, R)$,  $\{Y_t\}_{t\ge 0}\subset GL(n, R)$, and  a sequence of automorphisms $\{\varphi_t\}_{t\ge 1}\subset Aut(R)$ such that $\{X_t\cdot\varphi_t(A)\cdot Y_t\}_{t\ge 1}$  converges in the $\mathfrak{m}$-adic topology to $X\cdot\varphi(A)\cdot Y$ for some $X\in GL(m,R)$, $Y\in GL(n,R)$, and $\varphi\in Aut(R)$, and such that  
	$$B-X_t\cdot\varphi_t(A)\cdot Y_t\in \mathfrak{m}^{N+1+t}\cdot M_{m,n}$$ 
	holds for all $t\ge 1$. Then we obtain $B=X\cdot\varphi(A)\cdot Y.$
	
	For this, we first construct sequences of  matrices $ \{U_t\}_{t\ge 1}\subset GL(m,R)$, $ \{V_t\}_{t\ge 1}\subset GL(n,R)$, and  $\{A_t\}_{t\ge 0}\subset M_{m,n}$ with $A_0=A$ and a sequence of automorphisms $\{\phi_t\}_{t\ge 1}\subset Aut(R)$ such that for all $t\ge 1$, we have\\
	i) $A_t=U_t\cdot\phi_t(A_{t-1})\cdot V_t$\\
	ii) $B-A_t\in \mathfrak{m}^{N+t+1}\cdot M_{m,n}$.\\
	Setting $Q:=N-k\ge 1$ we have by assumption	
$$B-A\in \mathfrak{m}^{N+1}\cdot M_{m,n}=\mathfrak{m}^{Q-1}\cdot\mathfrak{m}^{k+2}\cdot M_{m,n}
	\subset \mathfrak{m}^Q\cdot\langle E_{m, pq}\cdot A\rangle+ \mathfrak{m}^Q\cdot\langle A\cdot E_{n, hl}\rangle +\mathfrak{m}^{Q+1}\cdot\left\langle \frac{\partial A}{\partial x_\nu}\right\rangle.$$
	Hence, there are $u_{pq}^{(1)}\in \mathfrak{m}^Q$, $v_{hl}^{(1)}\in \mathfrak{m}^Q$, and $d_{1,\nu}\in \mathfrak{m}^{Q+1}$ for all $p,q=1,\ldots,m$, $h,l=1,\ldots,n$, and $\nu=1,\ldots,s$ such that
	\begin{align*}
	B-A&=
	\sum\limits_{p,q = 1}^m {u_{pq}^{(1)}\cdot E_{m, pq} }\cdot A+\sum\limits_{h,l = 1}^n {v_{hl}^{(1)} \cdot A\cdot E_{n, hl} }+
	\sum\limits_{\nu = 1}^s {d_{1,\nu}\cdot \frac{{\partial A}}{{\partial x_\nu }}}\\
	&= U^{(1)}\cdot A+A\cdot V^{(1)}+\sum\limits_{\nu= 1}^s {d_{1,\nu}\cdot \frac{{\partial A}}{{\partial x_\nu}}},
	\end{align*}
	\noindent where $U^{(1)}:=[u_{pq}^{(1)}]\in \mathfrak{m}^Q\cdot M_m$ and $V^{(1)}:=[v_{hl}^{(1)}]\in \mathfrak{m}^Q\cdot M_n.$
	
	Define $U_1:=I_m+U^{(1)}\in GL(m,R)$, $V_1:=I_n+V^{(1)}\in GL(n,R)$,  
	$$\phi_1: R \to R, \hskip 9pt x_\nu\mapsto x_\nu+d_{1,\nu}\hskip3pt, \hskip 4pt  \nu=1,\ldots,s,$$
	and $A_1:=U_1\cdot\phi_1(A)\cdot V_1$.
	We now show that 
	$$B-A_1\in\mathfrak{m}^{N+2}\cdot M_{m,n}.$$
	For all $i=1,\ldots, m$ and $j=1,\ldots,n$, applying Lemma \ref{expansion} to $f=a_{ij}=\sum\limits_{\left| \alpha  \right| \ge o_{ij}} {c_\alpha^{(ij)}  {\bf x}^\alpha  }$ and $(z_1,\ldots,z_s)=(d_{1,1},\ldots, d_{1,s})$ we have 
	$$\phi_1(a_{ij}({\bf x}))=a_{ij}(x_1+d_{1,1},\ldots, x_s+d_{1,s})=a_{ij}({\bf x})+
	\sum\limits_{\nu = 1}^s {\frac{{\partial a_{ij}({\bf x})}}{{\partial x_\nu}}\cdot d_{1,\nu} } +h_{ij},$$
	where
	$$h_{ij}=\sum\limits_{\left| \alpha  \right| \ge o_{ij}} {{c_\alpha^{(ij)}\cdot  \left( {\sum\limits_{ | \gamma| \ge 2  \atop 
					\gamma  \le \alpha } {\binom
					{\alpha _1 } { \gamma _1} \cdot\ldots\cdot\binom
					{\alpha _s } { \gamma _s}{\bf x}^{\alpha  - \gamma }\cdot d_{1,1}^{\gamma_ 1}\cdot\ldots \cdot d_{1,s}^{\gamma _s} } }\right)}}.
	$$ 
	Then for all $i$ and $j$, for $\alpha\in\N^s$, $|\alpha|\ge o_{ij}$, and for $\gamma\in \N^s$, $\gamma\le\alpha$, $|\gamma|\ge 2$, we have 
	$$|\alpha|-|\gamma|+(Q+1)|\gamma|=|\gamma|Q+|\alpha|\ge 2Q+o_{ij}\ge 2Q+o=N+2.$$
	Thus, for all $i=1,\ldots,m$ and $j=1,\ldots,n$, we have $h_{ij}\in\mathfrak{m}^ {N+2}.$
	We obtain 
	$$\phi_1(A)=A+\sum\limits_{\nu=1}^sd_{1,\nu}\cdot\frac{\partial A}{\partial x_\nu}+H,$$
	where $H=[h_{ij}]$ and $H\in\mathfrak{m}^{N+2}\cdot M_{m,n}$. This implies that
	\begin{align*}
	B-A_1=B&-\left(I_m+U^{(1)}\right)\cdot\phi_1(A)\cdot\left(I_n+V^{(1)}\right)= -U^{(1)}\cdot A\cdot V^{(1)} - U_1\cdot H\cdot V_1 -\\
	&-{\sum\limits_{\nu  = 1}^s {d_{1,\nu }\cdot U^{(1)}\cdot\frac{{\partial A}}{{\partial x_\nu  }}} } - {\sum\limits_{\nu  = 1}^s {d_{1,\nu }\cdot \frac{{\partial A}}{{\partial x_\nu  }}} }\cdot V^{(1)}-  {\sum\limits_{\nu  = 1}^s {d_{1,\nu }\cdot U^{(1)}\cdot\frac{{\partial A}}{{\partial x_\nu  }}} }\cdot V^{(1)}\\
	&\in \mathfrak{m}^{N+2}\cdot M_{m,n}.
	\end{align*}

	Now that by assumption and Lemma {\ref{lemma}} 
	\begin{align*}
	\mathfrak{m}^{k+2}\cdot M_{m,n} \subset \mathfrak{m}\cdot\langle E_{m, pq}\cdot A_1\rangle+\mathfrak{m}\cdot\langle A_1\cdot E_{n, hl}\rangle+\mathfrak{m}^2\cdot\left\langle \frac{\partial (A_1)}{\partial x_\nu}\right\rangle.
	\end{align*}

	Furthermore, since $\order(A_1)=\order(A)=o$, we can proceed inductively to construct the sequences $\{A_t\}_{t\ge 0}$, $\{U_t\}_{t\ge 1}$, $\{V_t\}_{t\ge 1}$, and $\{\phi_t\}_{t\ge 1}$ as desired.
	
	%%%%%%%%%%%%%%
	Now, for $t\ge 1$, we define 
	\begin{align*}
	&\varphi_t:=\phi_t\circ\ldots\circ\phi_1,\\
	&X_t=U_t\cdot\phi_t(X_{t-1}),\hskip 5pt X_0=I_m,  {\text {and}}\\
	&Y_t=\phi_t(Y_{t-1})\cdot V_t, \hskip 14pt  Y_0=I_n.
	\end{align*}
	Then by induction we obtain $A_t=X_{t}\cdot\varphi_t(A)\cdot Y_t$ and $B-X_t\cdot\varphi_t(A)\cdot Y_t\in \mathfrak{m}^{N+t+1}\cdot M_{m,n}.$

	It remains to prove that $\big\{X_t\cdot\varphi_t(A)\cdot Y_t\big\}_{t\ge 1}$ converges to $X\cdot\varphi(A)\cdot Y$ in the $\mathfrak{m}$-adic topology for some $\varphi\in Aut(R)$, $X\in GL(m,R)$, and $Y\in GL(n,R)$. For that, we show that the sequences $\{X_t\}_{t\ge 0}$, $\{Y_t\}_{t\ge 0}$, and $\{\varphi_t(x_\nu)\}_{t\ge 1}$ for all $\nu\in\{1,\ldots, s\}$ are Cauchy sequences and define $X$, $Y$, and $\varphi$ as their limits, respectively. 
	We have for all $t\ge 1$,
	$$X_t-X_{t-1}=\left(I_m+U^{(t)}\right)\cdot\phi_t(X_{t-1})-X_{t-1}=\phi_t(X_{t-1})-X_{t-1}+U^{(t)}\cdot\phi_t(X_{t-1})\in \mathfrak{m}^{Q+t-1}\cdot M_{m}$$
	since $\phi_t(X_{t-1})-X_{t-1}\in \mathfrak{m}^{Q+t}\cdot M_{m}$ and $U^{(t)}\in \mathfrak{m}^{Q+t-1}\cdot M_{m}$. \\

Hence, given $P\ge 1$, for $t>r\ge r_0$ with $r_0=\max\{P-Q, 1\}$ we have
	$$X_t-X_r=\left(X_t-X_{t-1}\right) + \ldots +\left(X_{r+1}-X_{r}\right)\in \mathfrak{m}^{Q+r}\cdot M_{m}\subset\mathfrak{m}^{P}\cdot M_{m}.$$
	This shows that 
	$\{X_t\}_{t\ge 0}$ is a Cauchy sequence in $M_{m}$, and thus it converges to a matrix $X\in M_m$. Moreover, it follows by induction that
	$$X_t-I_m\in \mathfrak{m}^Q\cdot M_{m}$$
	so that $X=I_m+X_0$ for some $X_0\in\mathfrak{m}^Q\cdot M_{m}.$ By the same argument, $\{Y_t\}_{t\ge 0}$ converges to a $Y\in GL(n,R)$. Now fix $\nu\in \{1,\ldots,s\}$ and  express $\phi_t(\varphi_{t-1}(x_\nu))$ in term of $\varphi_{t-1}(x_\nu)$ as above, we have 
	$$\varphi_t(x_\nu)-\varphi_{t-1}(x_\nu)
	=\phi_t(\varphi_{t-1}(x_\nu))-\varphi_{t-1}(x_\nu)\in\mathfrak{m}^{Q+t}$$
	since $d_{t,\nu}\in \mathfrak{m}^{Q+t}$ for all $\nu=1,\ldots,s$. By a similar argument as above, 
	$\{\varphi_t(x_\nu)\}_{t\ge 1}$ is a Cauchy sequence and hence converges in $R$. In addition, by induction we have 
	$$\varphi_t(x_\nu)- x_\nu=\varphi_{t}(x_\nu)-\varphi_{t-1}
	(x_\nu)+\varphi_{t-1} (x_\nu)-x_\nu\in \mathfrak{m}^{Q+1}.$$
	This implies that $\{\varphi_t(x_\nu)\}_{t\ge 1}$ converges to a power series $x_\nu+d_\nu$ for some $d_\nu\in \mathfrak{m}^{Q+1}$. Define the automorphism $\varphi$ by
	$$\varphi: R \to R, \hskip 6pt x_\nu \mapsto x_\nu+d_\nu, \hskip 6pt \nu=1,\ldots, s.$$
	 
	Then for all $i=1,\ldots, m$ and $j=1,\ldots,n $,
	$$\varphi_t(a_{ij})-\varphi(a_{ij})\in \mathfrak{m}^P,$$
	which implies $\varphi_t(A)-\varphi(A)\in \mathfrak{m}^P\cdot M_{m,n}$. On the other hand, since $\{X_t\}_{t\ge 0}$ converges to $X$ and $\{Y_t\}_{t\ge 0}$ converges to $Y$, there are  $t_1$ and $t_2\in \N$ such that $X_t-X\in \mathfrak{m}^P\cdot M_{m}$ for all $t\ge t_1$ and $Y_t-Y\in \mathfrak{m}^P\cdot M_{n}$ for all $t\ge t_2$. Hence, for $t\ge t_3$ with $t_3:=\max\{t_0, t_1, t_2\}\ge1$, we have
	\begin{align*}
	X_t\cdot\varphi_t(A)\cdot Y_t-X\cdot\varphi(A)\cdot Y&=X_t\cdot\varphi_t(A)\cdot(Y_t-Y)+X_t\cdot(\varphi_t(A)-\varphi(A))\cdot Y\\
	&\hskip 15pt +(X_t-X)\cdot\varphi(A)\cdot Y\in\mathfrak{m}^P\cdot M_{m,n}.
	\end{align*}
By uniqueness of the limit, we get $B=X\cdot\varphi(A)\cdot Y.$
\end{proof}

\begin{Remark}\rm
	\begin{enumerate}
		\item For $\characteristic(K)=0$, condition \eqref{main} is also necessary for finite determinacy, see Proposition \ref{separable criterion}. We do not know whether this is true in positive characteristic for arbitrary $m$, $n$. For $n=1$ and $G=\mathcal{G}_{lr}$ this is true as shown in \cite{GP16} (here $\mathcal{G}_{lr}$-equivalence coincides with $\mathcal{G}_{l}$-equivalence).
		\item The determinacy bound given in Theorem \ref{finite determinacy} is new also in characteristic zero, but we expect that it can be improved by using integration of vector fields.
	\end{enumerate}
\end{Remark}

% % % % % % % % % % % % % % % % % % % % % % % % % % % % % % % % % % % %
\section{Finite determinacy for separable orbit maps}\label{sufficient condition 2}

We provide a criterion saying that for a matrix $A\in \mathfrak{m}\cdot M_{m,n}$, if the orbit map $G^{(k)}\to G^{(k)}jet_k(A)$ is separable for all $k\ge k_0$, some $k_0\in \N$, then finite $G$-determinacy and finite codimension of the tangent image $\widetilde T_A(GA)$ are equivalent.

We derive first equivalent conditions to finite codimension of $\widetilde T_A(GA)$ for which we need the following result from commutative algebra. 

\begin{Lemma}{\label{General lemma}}
	Let $(R,\mathfrak{m})$ be a local Noetherian $K$-algebra and $L$ a finitely generated $R$-module. Then the following are equivalent:
	\begin{enumerate}
		\item  $\dim_KL<\infty$.
		\item $\mathfrak{m}^k\cdot L=0$ for some $k$.
		\item Let $R^t\xrightarrow{\Theta} R^l  \to L \to 0$ be a presentation of $L$, then there is some $h$ such that $\mathfrak{m}^h \subset I_l(\Theta)$, where $I_l(\Theta)$ is the 0-th Fitting ideal of $L$, i.e.  the ideal generated by all $l\times l$ minors of the presentation matrix $\Theta$.
	\end{enumerate}
	Moreover, with the same notation as in 3., $L=0$ if and only if $I_l(\Theta)=R$.
	
	{\bf Supplement:} If $\mathfrak{m}^h \subset I_l(\Theta)$ for some positive integer $h$ then $\mathfrak{m}^h\cdot L=0$. 
\end{Lemma}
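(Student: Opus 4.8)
The plan is to prove the equivalences $(1)\Leftrightarrow(2)\Leftrightarrow(3)$ and to read off the Supplement and the final equivalence along the way. Everything rests on the two standard inclusions $\annihilator(L)^l\subseteq I_l(\Theta)\subseteq\annihilator(L)$, which I would record first. For $I_l(\Theta)\subseteq\annihilator(L)$: if $d$ is the $l\times l$ minor of $\Theta$ formed on a column set $S$, then Cramer's rule gives $d\cdot R^l\subseteq\operatorname{im}(\Theta_S)\subseteq\operatorname{im}(\Theta)$, and since $L=\coker(\Theta)$ this forces $dL=0$; as such minors generate $I_l(\Theta)$, the inclusion follows. For $\annihilator(L)^l\subseteq I_l(\Theta)$ I would use the usual determinantal trick: given $a_1,\ldots,a_l\in\annihilator(L)$, each $a_je_j$ (with $e_j$ a basis vector of $R^l$) maps to $0$ in $L$, hence lies in $\operatorname{im}(\Theta)$, so the diagonal map $\operatorname{diag}(a_1,\ldots,a_l)$ factors as $\Theta\Psi$ for a suitable $t\times l$ matrix $\Psi$; the Cauchy--Binet formula then writes $a_1\cdots a_l=\determinant(\operatorname{diag}(a_1,\ldots,a_l))$ as an $R$-linear combination of $l\times l$ minors of $\Theta$, and such products generate $\annihilator(L)^l$.

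Granting these, the remaining implications are short. $(3)\Rightarrow(2)$, and at the same time the Supplement, follow since $\mathfrak{m}^h\subseteq I_l(\Theta)\subseteq\annihilator(L)$ forces $\mathfrak{m}^hL=0$. For $(2)\Rightarrow(3)$: $\mathfrak{m}^kL=0$ says $\mathfrak{m}^k\subseteq\annihilator(L)$, whence $\mathfrak{m}^{kl}\subseteq\annihilator(L)^l\subseteq I_l(\Theta)$, so (3) holds with $h=kl$. For $(1)\Rightarrow(2)$ I would consider the descending chain of $K$-subspaces $L\supseteq\mathfrak{m}L\supseteq\mathfrak{m}^2L\supseteq\cdots$; if $\dim_KL<\infty$ it stabilizes, so $\mathfrak{m}^kL=\mathfrak{m}\cdot(\mathfrak{m}^kL)$ for some $k$, and Nakayama (valid since $R$ is Noetherian local and $\mathfrak{m}^kL$ is finitely generated) gives $\mathfrak{m}^kL=0$. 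For $(2)\Rightarrow(1)$: the finite filtration $L\supseteq\mathfrak{m}L\supseteq\cdots\supseteq\mathfrak{m}^kL=0$ has subquotients $\mathfrak{m}^iL/\mathfrak{m}^{i+1}L$ which are finitely generated $R/\mathfrak{m}$-modules, hence finite-dimensional over $R/\mathfrak{m}=K$; summing the dimensions gives $\dim_KL<\infty$.

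For the last equivalence: if $I_l(\Theta)=R$ then $\annihilator(L)=R$ by $I_l(\Theta)\subseteq\annihilator(L)$, so $L=0$; conversely, $L=0$ makes the presentation map $\Theta\colon R^t\to R^l$ surjective, so its reduction modulo $\mathfrak{m}$ is a surjective $l\times t$ matrix over the field $R/\mathfrak{m}$, hence has rank $l$, so some $l\times l$ minor of it is nonzero, and then the corresponding minor of $\Theta$ lies outside $\mathfrak{m}$, i.e.\ is a unit of the local ring $R$, giving $I_l(\Theta)=R$. The whole argument is routine commutative algebra; the step I expect to need the most care is the inclusion $\annihilator(L)^l\subseteq I_l(\Theta)$ (the factorization through $\Theta$ together with the Cauchy--Binet bookkeeping), and I would also point out that the implicit hypothesis $R/\mathfrak{m}=K$ --- or at least $\dim_K R/\mathfrak{m}<\infty$ --- enters exactly once, namely in $(2)\Rightarrow(1)$.
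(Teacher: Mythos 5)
Your proof is correct and follows essentially the same route as the paper: the paper simply cites \cite[Proposition 20.7]{Eis95}, which is exactly the pair of inclusions $\annihilator(L)^l\subseteq I_l(\Theta)\subseteq\annihilator(L)$ on which you base everything, and the remaining reductions (Nakayama for $(1)\Rightarrow(2)$, the finite filtration for $(2)\Rightarrow(1)$, and the unit-minor argument for $L=0$) are the standard ones intended by ``follows easily''. Your remark that $R/\mathfrak{m}=K$ is used only in $(2)\Rightarrow(1)$ is accurate and consistent with the paper's setting $R=K[[x_1,\ldots,x_s]]$.
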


\begin{proof}
 Follows easily from \cite[Proposition 20.7]{Eis95}.
\end{proof}

\begin{Proposition}\label{sufficient condition for determinacy}
	Let $A\in \mathfrak{m}\cdot M_{m,n}$. Then $A$ is finitely $G$-determined if one of the following  equivalent statements holds:
	\begin{enumerate}
		\item\label{sufficient condition for determinacy 1}  $\mathfrak{m}^{l+1}\cdot M_{m,n}\subset \widetilde T_A(GA)$ for some positive integer $l$.
		\item\label{sufficient condition for determinacy 2}  $\dim_K\left(\mathfrak{m}\cdot M_{m,n}/\widetilde T_A(GA)\right) =:d<\infty$.
		\item\label{sufficient condition for determinacy 3} $\dim_K\left(M_{m,n} \big{/}\widetilde T_A^e(GA)\right) =:d_e<\infty.$
		\item\label{sufficient condition for determinacy 4} $\mathfrak{m}^h\cdot M_{m,n}\subset \widetilde T_A^e(GA)$ for some positive integer $h$.
		\item\label{sufficient condition for determinacy 5}  $\mathfrak{m}^k \subset I_{mn}\left(\Theta_{(G,A)}\right)$ for some positive integer $k$, where $$R^t\xrightarrow{\Theta_{(G,A)}} M_{m,n}  \to M_{m,n}/\widetilde T_A^e(GA)\to 0$$
		is a presentation of $M_{m,n}/\widetilde T_A^e(GA)$, i.e.  $\variety\left(I_{mn}\left(\Theta_{(G,A)}\right)\right)= \{\mathfrak{m}\}$.
		\item $\support\left(M_{m,n}/\widetilde T_A^e(GA)\right)=\{\mathfrak{m}\}$, i.e. $\left(M_{m,n}\right)_P=\left(\widetilde T_A^e(GA)\right)_P$ for all $P\in \spectrum(R)\smallsetminus \{\mathfrak{m}\}$.
	\end{enumerate}
	Furthermore, if the condition 1. (resp. 2., 3., 4., and 5.)  above holds  then $A$ is $G$ ($2c-\order(A)+2$)-determined, where $c= l$ (resp. d, $d_e$, h, and k).
\end{Proposition}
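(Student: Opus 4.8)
The strategy is to establish all the equivalences among (1)--(6) using Lemma \ref{General lemma}, then to transfer finite determinacy and the determinacy bound from Theorem \ref{finite determinacy}. First I would clarify the algebraic setup: the quotients $\mathfrak{m}\cdot M_{m,n}/\widetilde T_A(GA)$ and $M_{m,n}/\widetilde T_A^e(GA)$ are finitely generated $R$-modules (note $\widetilde T_A(GA)\subset \mathfrak{m}\cdot M_{m,n}$ since each generator $E_{m,pq}\cdot A$, $A\cdot E_{n,hl}$, $\frac{\partial A}{\partial x_\nu}$ lies in $\mathfrak{m}\cdot M_{m,n}$ because $A\in\mathfrak{m}\cdot M_{m,n}$; here one uses $\order(A)\ge 1$, so also $\frac{\partial A}{\partial x_\nu}\in \mathfrak{m}\cdot M_{m,n}$ — wait, this needs $\order(A)\ge 2$, so more carefully $\mathfrak{m}\cdot\langle\partial A/\partial x_\nu\rangle\subset\mathfrak{m}\cdot M_{m,n}$ trivially and the other generators are in $\mathfrak{m}\cdot M_{m,n}$, hence $\widetilde T_A(GA)\subset\mathfrak{m}\cdot M_{m,n}$ is fine once we keep the factor $\mathfrak{m}$ in front of the partials as in Definition \ref{tangent space for groups}).

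\textbf{Key steps.} The equivalences split naturally. The equivalence of (2) and (1): applying Lemma \ref{General lemma} to $L=\mathfrak{m}\cdot M_{m,n}/\widetilde T_A(GA)$, finiteness of $\dim_K L$ is equivalent to $\mathfrak{m}^c\cdot L=0$ for some $c$, i.e. $\mathfrak{m}^c\cdot(\mathfrak{m}\cdot M_{m,n})\subset\widetilde T_A(GA)$, which rephrases as $\mathfrak{m}^{l+1}\cdot M_{m,n}\subset\widetilde T_A(GA)$ for suitable $l$; conversely (1) directly gives $\mathfrak{m}^l\cdot L=0$. The equivalence of (3) and (4) is the same argument applied to $L=M_{m,n}/\widetilde T_A^e(GA)$. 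The equivalence of (4) and (5) is precisely item 3 of Lemma \ref{General lemma} together with its supplement, identifying $I_{mn}(\Theta_{(G,A)})$ as the $0$-th Fitting ideal of $M_{m,n}/\widetilde T_A^e(GA)$ (here $M_{m,n}\cong R^{mn}$ as free $R$-module, so $l=mn$ in the notation of the lemma). The equivalence of (5) and (6) is the standard fact that $\support(L)=\variety(\Fitt_0(L))=\variety(\annihilator(L))$ for a finitely generated module over a Noetherian ring, so $\support(L)=\{\mathfrak{m}\}$ iff $\sqrt{I_{mn}(\Theta_{(G,A)})}=\mathfrak{m}$ iff $\mathfrak{m}^k\subset I_{mn}(\Theta_{(G,A)})$ for some $k$. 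Finally, to connect (1)--(2)/(3)--(4) across the ``extended'' and ``non-extended'' versions: since $\widetilde T_A(GA)=\widetilde T_A^e(GA)$ in all the cases where the partials carry a factor $\mathfrak{m}$... no — rather, $\widetilde T_A^e(GA)=\widetilde T_A(GA)+R\cdot\langle\partial A/\partial x_\nu\rangle$, so $\mathfrak{m}\cdot\widetilde T_A^e(GA)\subset\widetilde T_A(GA)$; thus (4) $\mathfrak{m}^h\cdot M_{m,n}\subset\widetilde T_A^e(GA)$ implies $\mathfrak{m}^{h+1}\cdot M_{m,n}\subset\mathfrak{m}\cdot\widetilde T_A^e(GA)\subset\widetilde T_A(GA)$, giving (1) with $l=h$; the reverse inclusion $\widetilde T_A(GA)\subset\widetilde T_A^e(GA)$ gives (1) $\Rightarrow$ (4) directly. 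This closes the cycle.

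\textbf{The determinacy statement and bound.} Given any of the equivalent conditions, I would pass to condition (1): $\mathfrak{m}^{c+1}\cdot M_{m,n}\subset\widetilde T_A(GA)$ (with $c$ being $l$, or $d$, or $d_e$, or $h$, or $k$ depending on which form we started from — the point being that in each case Lemma \ref{General lemma} produces the exponent bounded by the relevant invariant, e.g. $\mathfrak{m}^d\cdot L=0$ when $\dim_K L=d$ by the standard Hilbert-function estimate in a local ring, similarly for $d_e$; and for the Fitting version the supplement of Lemma \ref{General lemma} gives $\mathfrak{m}^k\cdot L=0$). Multiplying by $\mathfrak{m}$ yields $\mathfrak{m}^{c+2}\cdot M_{m,n}\subset\mathfrak{m}\cdot\widetilde T_A(GA)$, which is exactly hypothesis \eqref{main} of Theorem \ref{finite determinacy} with $k$ replaced by $c$. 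Theorem \ref{finite determinacy} then gives that $A$ is $G$ $(2c-\order(A)+2)$-determined, which is the asserted bound. The only mild subtlety — the step I expect to be the real (if minor) obstacle — is the bookkeeping that in each of the five cases the exponent one extracts from Lemma \ref{General lemma} is indeed $\le c$ with $c$ the stated invariant: for (2) and (3) this is the inequality $\mathfrak{m}^{\dim_K L}\cdot L=0$ valid for a finite-length module over a local ring (since the Hilbert function $\dim_K(\mathfrak{m}^iL/\mathfrak{m}^{i+1}L)$ is $0$ as soon as the partial sums reach $\dim_K L$, and being zero once means zero forever by Nakayama), and for (5) it is literally the supplement of Lemma \ref{General lemma}; for (1) and (4) the exponent is $c$ by definition. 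Everything else is formal.
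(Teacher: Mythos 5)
Your proposal is correct and follows essentially the same route as the paper: the equivalences are all reduced to Lemma \ref{General lemma} (finite length $\Leftrightarrow$ $\mathfrak{m}^c L=0$ $\Leftrightarrow$ Fitting-ideal condition $\Leftrightarrow$ support condition), the extended and non-extended tangent images are linked via $\mathfrak{m}\cdot\widetilde T_A^e(GA)\subset\widetilde T_A(GA)$ (which the paper leaves implicit in its ``(4 $\Rightarrow$ 1) is obvious''), and the determinacy bound is obtained exactly as in the paper by multiplying the inclusion from condition (1) by $\mathfrak{m}$ and invoking Theorem \ref{finite determinacy}, with the exponent bookkeeping ($\mathfrak{m}^{d}L=0$ for $\dim_K L=d$, and the supplement of the lemma for the Fitting case) matching the paper's. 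No gaps.
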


\begin{proof} By Theorem \ref{finite determinacy} condition 1. implies finite determinacy. We prove that the six conditions are equivalent:
	
	(1. $\Rightarrow$ 2.) This is clear since $\dim_K\left(\mathfrak{m}\cdot M_{m,n}/\mathfrak{m}^{l+1}\cdot M_{m,n}\right)<\infty$.
	
	(2. $\Rightarrow$ 3.) Since $\widetilde T_A(GA)\subset \widetilde T_A^e(GA),$ we have $$\dim_K\left(M_{m,n} \big{/}\widetilde T_A^e(GA)\right)\le \dim_K\left(M_{m,n} \big{/}\widetilde T_A(GA)\right)<\infty.$$
	Now use that the difference between $\dim_K\left(M_{m,n} \big{/}\widetilde T_A(GA)\right)$ and $\dim_K\left(\mathfrak{m}\cdot M_{m,n}/\widetilde T_A(GA)\right)$ is finite.
	
	(3. $\Rightarrow$ 4.) We have a chain of $K$-vector subspaces:
	$$M_{m,n}\big{/}\widetilde T^e_A(GA)\supset \left(\mathfrak{m}\cdot M_{m,n}+\widetilde T^e_A(GA)\right)\big{/}\widetilde T^e_A(GA)\supset \ldots $$
	If $\dim_K\left(M_{m,n}/\widetilde T^e_A(GA)\right)=d_e$ then $\left(\mathfrak{m}^{d_e}\cdot M_{m,n}+\widetilde T^e_A(GA)\right)\big{/}\widetilde T^e_A(GA)=0$ so that $\mathfrak{m}^{d_e}\cdot M_{m,n}\subset \widetilde T^e_A(GA)$.
	
	(4. $\Rightarrow$ 1.) This is obvious.
	
	(4. $\Leftrightarrow$ 5.) Apply Lemma \ref{General lemma} (2. $\Leftrightarrow$ 3.) to $L=M_{m,n}/\widetilde T^e_A(GA)$.
	
	$(5.\Leftrightarrow 6.)$. Let $R^t\xrightarrow{\Theta_{(G,A)}} M_{m,n}  \to M_{m,n}/\widetilde T_A^e(GA) \to 0$ be a presentation of $M_{m,n}/\widetilde T_A^e(GA)$. By \cite[Proposition 20.7]{Eis95}, $$\sqrt {I_{mn}\left(\Theta_{(G,A)}\right)} = \sqrt{\annihilator_R\left(M_{m,n}/\widetilde T_A^e(GA)\right)}.$$
	This implies the zero sets of the two ideals $I_{mn}\left(\Theta_{(G,A)}\right)$ and $\annihilator_R\left(M_{m,n}/\widetilde T_A^e(GA)\right)$ define the same varieties in $\spectrum(R)$. That means 
	$$\support\left(M_{m,n}/\widetilde T_A^e(GA)\right)=\variety\left(I_{mn}\left(\Theta_{(G,A)}\right)\right)$$
	and thus the two assertions are equivalent. 
	
	To derive the determinacy bound, we apply Theorem \ref{finite determinacy}. By multiplying the inclusion in 1. (resp. 4) with $\mathfrak{m}$ (resp. $\mathfrak{m}^2$) and  applying Theorem \ref{finite determinacy}, we get the bound. Assume that the condition in 3. (resp. 2.) holds. Using the argument as in the proof of  $(3.\Leftrightarrow 4.)$ we obtain that $\mathfrak{m}^{d_e}\cdot M_{m,n}\subset \widetilde T^e_A(GA)$ (resp. $\mathfrak{m}^{d+1}\cdot M_{m,n}\subset \widetilde T_A(GA)$). Applying the determinacy bound for 4. (resp. 1.), we get the claim. Now assume that condition 5. holds. Apply the supplement of Lemma {\ref {General lemma}} to $L= M_{m,n}/\widetilde T_A^e(GA)$, we have $\mathfrak{m}^k\cdot L=0$. This implies $\mathfrak{m}^k\cdot M_{m,n}\subset \widetilde T_A^e(GA)$,  and thus  $A$ is $G$ $(2k-\order(A)+2)$-determined. 
\end{proof}

\begin{Theorem}\label{separable criterion}
	Let $A\in \mathfrak{m}\cdot M_{m,n}$. 
	\begin{enumerate}
	\item Assume that $A$ is finitely $G$-determined and the orbit map $G^{(l)}\to G^{(l)}jet_l(A)$ is separable for some $l> k$, where $k$ is a $G$-determinacy bound of $A$. Then  the equivalent conditions in Proposition \ref{sufficient condition for determinacy} hold. 	
	\item Assume that there is some $k\in \N$ such that the orbit map  $G^{(l)}\to G^{(l)}jet_l(A)$ is separable for all $l\ge k$ (e.g. if $\characteristic(K)=0$). Then $A$ is finitely $G$-determined if and only if one of the equivalent conditions in Proposition \ref{sufficient condition for determinacy} holds.
	\end{enumerate}
\end{Theorem}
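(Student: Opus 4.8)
The plan is to deduce Theorem \ref{separable criterion} from the machinery already in place: Theorem \ref{finite determinacy} (the sufficient criterion), the necessary condition $\dim_K M_{m,n}/T_A(GA)<\infty$ for finite determinacy, and Lemma \ref{tangent space char 0} together with Corollary \ref{Corollary2.3}, which identify $\widetilde T_A(GA)$ with $T_A(GA)$ precisely when the orbit maps are separable in high enough jet order. The key observation is that separability is exactly what converts the a priori strict inclusion $\widetilde T_A(GA)\subset T_A(GA)$ into an equality, and the whole theorem is then a bookkeeping of the two implications.

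For part (1): assume $A$ is finitely $G$-determined, say $G$ $k$-determined, and the orbit map $o^{(l)}$ is separable for some $l>k$. First I would show that $\mathfrak{m}^{l+1}\cdot M_{m,n}\subset T_{jet_l(A)}\big(G^{(l)}jet_l(A)\big)$, viewed inside $M_{m,n}^{(l)}$: this is the same curve argument as in the Lemma following Example \ref{surjective tangent map}, since for $B\in\mathfrak{m}^{l+1}\cdot M_{m,n}$ and $l>k$ the line $A+tB$ lies in $GA$, so $jet_l(B)$ is a tangent vector to the orbit at $jet_l(A)$. By Corollary \ref{Corollary2.3}(iii), separability of $o^{(l)}$ gives $\widetilde T_{jet_l(A)}\big(G^{(l)}jet_l(A)\big)=T_{jet_l(A)}\big(G^{(l)}jet_l(A)\big)$, hence $\mathfrak{m}^{l+1}\cdot M_{m,n}$ maps into the tangent \emph{image} at level $l$. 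Pulling this back via the inverse limit description (and Proposition \ref{tangent space}, which shows the level-$l$ tangent image is the image of $\widetilde T_A(GA)+\mathfrak{m}^{l+1}M_{m,n}$), I get $\mathfrak{m}^{l+1}\cdot M_{m,n}\subset \widetilde T_A(GA)+\mathfrak{m}^{l+1}\cdot M_{m,n}$, and a small Artin--Rees / Nakayama-type argument on the finitely generated module $M_{m,n}/\widetilde T_A(GA)$ upgrades this to the genuine inclusion $\mathfrak{m}^{l+1}\cdot M_{m,n}\subset\widetilde T_A(GA)$, which is condition \ref{sufficient condition for determinacy 1} of Proposition \ref{sufficient condition for determinacy}. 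The subtle point here — and the one I expect to require the most care — is passing from the jet-level statement ``$\mathfrak{m}^{l+1}M_{m,n}$ is in the level-$l$ tangent image'' to the honest module inclusion in $M_{m,n}$, i.e. controlling the inverse limit so that the finite-codimension statement survives the limit; this is where one must use that $\widetilde T_{jet_k(A)}$ eventually stabilizes modulo higher powers of $\mathfrak{m}$ and invoke Lemma \ref{General lemma}.

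For part (2): one direction is immediate — if any of the equivalent conditions in Proposition \ref{sufficient condition for determinacy} holds then $A$ is finitely $G$-determined by that Proposition (ultimately by Theorem \ref{finite determinacy}), and this direction needs no separability hypothesis. For the converse, suppose $A$ is finitely $G$-determined and $o^{(l)}$ is separable for all $l\ge k$. Pick a $G$-determinacy bound $k'$ of $A$ and set $l=\max\{k,k'\}+1$; then the hypotheses of part (1) are met, so the equivalent conditions of Proposition \ref{sufficient condition for determinacy} hold. The characteristic-zero case is then just the remark (already recorded in Corollary \ref{Corollary2.3} and Lemma \ref{tangent space char 0}) that separability of all orbit maps is automatic when $\characteristic(K)=0$. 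I would present part (2) as a one-line corollary of part (1) plus Proposition \ref{sufficient condition for determinacy}, so that essentially all the work is concentrated in the jet-to-limit argument of part (1).
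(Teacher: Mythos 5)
Your overall strategy is exactly the paper's: use finite determinacy to produce lines $A+tB$ inside the orbit, hence tangent vectors to the jet-level orbit; use separability via Corollary \ref{Corollary2.3} to conclude that these tangent vectors lie in the tangent \emph{image}; pull back to $M_{m,n}$ via Proposition \ref{tangent space} and finish with Nakayama; then deduce part (2) from part (1) together with Proposition \ref{sufficient condition for determinacy}. Part (2) of your write-up is fine as stated.

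However, there is an off-by-one error in part (1) that makes the central step vacuous as written. You take $B\in\mathfrak{m}^{l+1}\cdot M_{m,n}$ and claim that $jet_l(B)$ is a tangent vector to the orbit at $jet_l(A)$; but for such $B$ one has $jet_l(B)=0$ in $M_{m,n}^{(l)}$, so this carries no information, and the inclusion you then pull back, $\mathfrak{m}^{l+1}\cdot M_{m,n}\subset\widetilde T_A(GA)+\mathfrak{m}^{l+1}\cdot M_{m,n}$, is a tautology. No Nakayama or Artin--Rees argument can upgrade $X\subset N+X$ to $X\subset N$ (take $N=0$). The correct indexing --- which is what the paper does --- is to take $B\in\mathfrak{m}^{l}\cdot M_{m,n}$: since $l>k$, the matrix $A$ is $G$ $(l-1)$-determined, so $A+tB\in GA$ for all $t$ and $jet_l(B)$ is a (generally nonzero) tangent vector at $jet_l(A)$; separability and the pullback then give
$\mathfrak{m}^{l}\cdot M_{m,n}\subset\widetilde T_A(GA)+\mathfrak{m}^{l+1}\cdot M_{m,n}
=\left(\widetilde T_A(GA)\cap\mathfrak{m}^{l}\cdot M_{m,n}\right)+\mathfrak{m}\cdot\left(\mathfrak{m}^{l}\cdot M_{m,n}\right)$,
and Nakayama applied to the finitely generated module $\mathfrak{m}^{l}\cdot M_{m,n}\big/\left(\widetilde T_A(GA)\cap\mathfrak{m}^{l}\cdot M_{m,n}\right)$ yields $\mathfrak{m}^{l}\cdot M_{m,n}\subset\widetilde T_A(GA)$, which is condition \ref{sufficient condition for determinacy 1} of Proposition \ref{sufficient condition for determinacy}. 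With this one-index shift your argument coincides with the paper's; no stabilization of the inverse system and no appeal to Lemma \ref{General lemma} is needed at this point.
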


The criterion of the following corollary was proved in \cite{Mat68} for complex or real analytic and $\C^\infty$ map germs, i.e. $M_{m,1}$. It follows for arbitrary matrices from Theorem \ref {separable criterion}, Theorem \ref{finite determinacy} and Lemma \ref {tangent space char 0}  (and was stated in \cite[1.4]{BK16} without proof):

\begin{Corollary}\label{Mather's criterion}
	If $\characteristic(K)=0$ then the matrix $A$ is finitely $G$-determined if and only if the embedding  $T_A(GA)\hookrightarrow M_{m,n}$ is of finite codimension, i.e.
	$$\dim_{K}\left( M_{m,n}/T_A(GA)\right)<\infty.$$

\end{Corollary}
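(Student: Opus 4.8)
The plan is to deduce the corollary by combining three facts already available: the necessary condition for finite determinacy established at the end of Section \ref{tangent image}, the equivalence of $\widetilde T_A(GA)$ with $T_A(GA)$ in characteristic zero (Lemma \ref{tangent space char 0}), and the list of equivalent sufficient conditions in Proposition \ref{sufficient condition for determinacy} together with Theorem \ref{separable criterion}. As in Proposition \ref{sufficient condition for determinacy} we may and do assume $A\in\mathfrak m\cdot M_{m,n}$, so that $\widetilde T_A(GA)\subseteq\widetilde T^e_A(GA)\subseteq M_{m,n}$.

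For the ``only if'' direction there is nothing to prove beyond citing the last Lemma of Section \ref{tangent image}: if $A$ is finitely $G$-determined then $\mathfrak m^l\cdot M_{m,n}\subseteq T_A(GA)$ for $l$ large, hence $\dim_K\bigl(M_{m,n}/T_A(GA)\bigr)<\infty$; this holds in arbitrary characteristic. For the ``if'' direction I would argue as follows. Since $\characteristic(K)=0$, Lemma \ref{tangent space char 0} says that the orbit maps $o^{(l)}$ are separable for all $l$ and that $\widetilde T_A(GA)=T_A(GA)$. Therefore the hypothesis $\dim_K\bigl(M_{m,n}/T_A(GA)\bigr)<\infty$ reads $\dim_K\bigl(M_{m,n}/\widetilde T_A(GA)\bigr)<\infty$, and the inclusion $\widetilde T_A(GA)\subseteq\widetilde T^e_A(GA)$ then forces $\dim_K\bigl(M_{m,n}/\widetilde T^e_A(GA)\bigr)<\infty$, which is precisely condition (3) of Proposition \ref{sufficient condition for determinacy}. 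By that proposition (whose sufficiency is itself supplied by Theorem \ref{finite determinacy}), or equivalently by Theorem \ref{separable criterion}(2) whose separability hypothesis is satisfied in characteristic zero, $A$ is finitely $G$-determined; moreover one reads off the explicit bound $2d_e-\order(A)+2$. This closes the equivalence.

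I do not expect a serious obstacle, since the statement is a synthesis of results proved earlier; the only point that needs care is the bookkeeping between the two descriptions of $T_A(GA)$ — as the inverse limit of the Zariski tangent spaces $T_{jet_l(A)}\bigl(G^{(l)}jet_l(A)\bigr)$ on the one hand, and as a submodule of $M_{m,n}$ amenable to the module-theoretic finiteness criteria (Lemma \ref{General lemma}) on the other — but this translation is exactly what Lemma \ref{tangent space char 0} and Corollary \ref{Corollary2.3} provide. It is also worth remarking that, unlike the classical approach over $\R$ or $\C$ via integration of vector fields, here the sufficiency goes through the $\mathfrak m$-adic approximation argument of Theorem \ref{finite determinacy}, so the corollary is valid over every field of characteristic zero.
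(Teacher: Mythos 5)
Your proposal is correct and follows essentially the same route as the paper, which derives the corollary by combining Lemma \ref{tangent space char 0} (so that in characteristic zero $\widetilde T_A(GA)=T_A(GA)$ and all orbit maps are separable) with Theorem \ref{separable criterion}, Proposition \ref{sufficient condition for determinacy} and Theorem \ref{finite determinacy}; your use of the necessary-condition lemma at the end of Section \ref{tangent image} for the ``only if'' direction is just the content of Theorem \ref{separable criterion}(1) made explicit. The bookkeeping you flag (passing from the inverse-limit description of $T_A(GA)$ to the module $\widetilde T_A(GA)\subseteq\widetilde T^e_A(GA)$) is handled exactly as you say, so there is no gap.
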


\begin{proof} (of the theorem)
    1. We prove that 
    $$\mathfrak{m}^{l}\cdot M_{m,n}\subset \widetilde T_A(GA).$$
	Indeed, let $B\in \mathfrak{m}^{l}\cdot M_{m,n}$.  Since $A$ is $G$ $(l-1)$-determined,
	$A+t\cdot B\in GA$ for all $t\in K$
	so that
	$$jet_{l}(A)+t \cdot jet_{l}(B)\in G^{(l)}jet_{l}(A)$$
	for all $t\in K$. This implies 
	$$jet_{l}(B)\in T_{jet_{l}(A)}\left(G^{(l)}jet_{l}(A)\right)=\widetilde T_{jet_{l}(A)}\left(G^{(l)}jet_{l}(A)\right),$$
	where the equality follows by the separability assumption using Corollary \ref {Corollary2.3}. Let $p_{l}: M_{m,n}\to  M_{m,n}^{(l)}$ be the projection. Then by taking the preimages, we obtain
	$$B\in \widetilde T_A(GA)+\mathfrak{m}^{l+1}\cdot M_{m,n}.$$
	This shows that 
	$$\mathfrak{m}^{l}\cdot M_{m,n}\subset \widetilde T_A(GA)+\mathfrak{m}^{l+1}\cdot M_{m,n},$$
	and implies that
	$$\mathfrak{m}^{l}\cdot M_{m,n}\subset  \widetilde T_A(GA)\cap\mathfrak{m}^{l}\cdot M_{m,n}+
	\mathfrak{m}^{l+1}\cdot M_{m,n}.$$
	Applying Nakayama's lemma to the $R$-submodule $ \widetilde T_A(GA)\cap\mathfrak{m}^{l}\cdot M_{m,n}$ 
	of the $R$-module $\mathfrak{m}^{l}\cdot M_{m,n}$ we obtain
	$$\mathfrak{m}^{l}\cdot M_{m,n}\subset \widetilde T_A(GA).$$
	
	2. Follows from 1. and Proposition 	\ref{sufficient condition for determinacy}.
\end{proof}

	We finish with a result for $G=\mathcal{R} = Aut(K[[x_1,..., x_s]])$, i.e. about right equivalence, by showing that finite determinacy is rather rare for $m>1$ under the separability condition. Let $A\in \mathfrak{m}\cdot M_{m,n}$. Since the action of $G$ does not use the matrix structure, we may assume that $A=[a_1\hskip 5pt a_2\ldots a_m]^T\in M_{m,1}$ is the matrix of one column, i.e. without loss of generality we may assume that $n=1$, just to make the notation shorter. Let $Jac(A):=\left[ \frac{\partial a_i}{\partial x_j}\right]\in Mat(m,s,R)$ be the Jacobian matrix of the vector $(a_1,\ldots, a_m)\in R^m$. Then $Jac(A)$ is a presentation matrix of $M_{m,1}/\widetilde T^e_A(GA) = R^m/\langle \frac{\partial A}{\partial x_j}\rangle $ and we have:

\begin{Proposition} \rm Let $G=\mathcal{R}$ and $A \in \mathfrak{m}\cdot M_{m,1}$. 
	\begin{enumerate}
	\item If $m>s$ then 
	$$\dim_K\left(M_{m,1}/\widetilde T_A(GA)\right)=\infty.$$ 
	If in addition there is some $k\in \N$ such that the orbit map  $G^{(l)}\to G^{(l)}jet_l(A)$ is separable for all $l\ge k$, then $A$ is not finitely right determined.  	
	\item If $m\le s$ and $\{\overline{a_i}\}$  is linearly independent in $\mathfrak{m}/\mathfrak{m}^2$, then $A$ is finitely right determined. If $m>1$ and there is some $k\in \N$ such that the orbit map  $G^{(l)}\to G^{(l)}jet_l(A)$ is separable for all $l\ge k$, then the converse also holds. 

\end{enumerate}
\end{Proposition}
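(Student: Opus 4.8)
The plan is to reduce everything to statements about the $R$-module $L := M_{m,1}/\widetilde T_A^e(GA) = R^m/\langle \partial A/\partial x_j\rangle$, whose presentation matrix is $\mathrm{Jac}(A) \in \mathrm{Mat}(m,s,R)$, and then apply the commutative-algebra criteria from Lemma~\ref{General lemma} and Proposition~\ref{sufficient condition for determinacy}. For part~1, suppose $m>s$. Then $\mathrm{Jac}(A)$ has $m$ rows and $s<m$ columns, so every $m\times m$ minor involves more rows than the matrix has columns; hence the $0$-th Fitting ideal $I_m(\mathrm{Jac}(A)) = 0$ (there simply are no $m\times m$ minors, or all are zero). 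By the last sentence of Lemma~\ref{General lemma}, $L \ne 0$; in fact, since $I_m(\mathrm{Jac}(A))=0$ is not $\mathfrak{m}$-primary, Lemma~\ref{General lemma} (2. $\Leftrightarrow$ 3.) gives $\dim_K L = \infty$, i.e. $\dim_K(M_{m,1}/\widetilde T_A^e(GA)) = \infty$. A fortiori $\dim_K(M_{m,1}/\widetilde T_A(GA))=\infty$, which is the first assertion. If moreover the orbit map is eventually separable, then by Theorem~\ref{separable criterion}(2) (equivalently, by contraposition of the equivalent conditions in Proposition~\ref{sufficient condition for determinacy}), finite right-determinacy would force $\dim_K(\mathfrak{m}\cdot M_{m,1}/\widetilde T_A(GA)) < \infty$, a contradiction; so $A$ is not finitely right determined.

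For part~2, assume $m\le s$ and that the classes $\overline{a_1},\dots,\overline{a_m}$ are linearly independent in $\mathfrak{m}/\mathfrak{m}^2$. After a linear change of coordinates in $R$ we may arrange $a_i = x_i + (\text{higher order})$ for $i=1,\dots,m$; this is legitimate since a linear coordinate change is an element of $\mathcal{R}$ and, by Lemma~\ref{lemma}(1), transforms $\widetilde T_A^e(GA)$ equivariantly, so finite determinacy is unaffected. With this normalization $\mathrm{Jac}(A)$ has the block form $[\,I_m + (\mathfrak{m}\text{-entries}) \mid *\,]$, so the leading $m\times m$ minor is a unit; hence $I_m(\mathrm{Jac}(A)) = R$, which by the last sentence of Lemma~\ref{General lemma} gives $L=0$, i.e. $\widetilde T_A^e(GA) = M_{m,1}$, so $\mathfrak{m}\cdot M_{m,1} \subset \widetilde T_A(GA)$ trivially. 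Then condition~1 of Proposition~\ref{sufficient condition for determinacy} holds and $A$ is finitely right determined.

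For the converse in part~2 (still $m>1$, with eventual separability of the orbit map): assume $A$ is finitely right determined but $\overline{a_1},\dots,\overline{a_m}$ are linearly \emph{dependent} in $\mathfrak{m}/\mathfrak{m}^2$, and derive a contradiction. After a linear coordinate change we may assume $a_1 \in \mathfrak{m}^2$ (some nonzero combination lies in $\mathfrak{m}^2$, and we rename it $a_1$; the other entries we adjust accordingly, using $m>1$ only to have at least two entries so that $\mathfrak{m}^2$-ness is a genuine constraint on one of them). Then every partial $\partial a_i/\partial x_j$ has order $\ge 1$ in its first-row entry contribution—more precisely, the first row of $\mathrm{Jac}(A)$ lies in $\mathfrak{m}\cdot R^s$, so the first-row entries of every generator $\partial A/\partial x_j$ of $\widetilde T_A^e(GA)$ lie in $\mathfrak{m}$, whence the unit vector $e_1 = (1,0,\dots,0)^T \notin \widetilde T_A^e(GA) + \mathfrak{m}\cdot M_{m,1}$. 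By Nakayama this forces $\widetilde T_A^e(GA) \subsetneq M_{m,1}$ with the quotient having positive-dimensional support (the first-coordinate projection $R^m/\widetilde T_A^e(GA) \twoheadrightarrow R/\mathfrak{m}\cdot(\dots)$ fails to be finite-dimensional over $K$), so condition~3 of Proposition~\ref{sufficient condition for determinacy} fails; by Theorem~\ref{separable criterion}(2) this contradicts finite right-determinacy. The main obstacle I expect is the bookkeeping in this last paragraph: making the reduction ``one entry lies in $\mathfrak{m}^2$'' rigorous under an $\mathcal{R}$-coordinate change, and then cleanly showing that this single degenerate row makes $\dim_K(M_{m,1}/\widetilde T_A^e(GA))$ infinite — one must argue that no amount of mixing with the other $m-1$ rows (whose partials are unconstrained) can cover $e_1$ modulo $\mathfrak{m}$, which is where the separability hypothesis enters via the chain $\widetilde T_A(GA) = T_A(GA)$ of Lemma~\ref{tangent space char 0} and the necessary condition $\dim_K M_{m,1}/T_A(GA) < \infty$.
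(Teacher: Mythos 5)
Your part~1 and the forward direction of part~2 are correct. Part~1 is the paper's argument verbatim ($I_m(\mathrm{Jac}(A))=0$ when $m>s$, then Lemma \ref{General lemma} and Theorem \ref{separable criterion}). For the forward direction of part~2 you take a slightly different route: you normalize coordinates and conclude $I_m(\mathrm{Jac}(A))=R$, hence $\widetilde T^e_A(GA)=M_{m,1}$, and invoke Proposition \ref{sufficient condition for determinacy}; the paper instead writes down the automorphism $\phi=\phi_2\circ\phi_1^{-1}$ explicitly and gets the sharper statement that $A$ is right $1$-determined. Both are valid.

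The converse in part~2 has a genuine gap, and it sits exactly where you flagged your own unease. From ``the first row of $\mathrm{Jac}(A)$ lies in $\mathfrak{m}\cdot R^s$'' you correctly get $e_1\notin \widetilde T^e_A(GA)+\mathfrak{m}\cdot M_{m,1}$, hence $L:=M_{m,1}/\widetilde T^e_A(GA)\neq 0$. But $L\neq 0$ does \emph{not} imply $\dim_K L=\infty$, and your parenthetical about the first-coordinate projection does not supply the missing step: the image of $\widetilde T^e_A(GA)$ in the first coordinate could still contain a power of $\mathfrak{m}$ after mixing with the other rows. The case $m=1$ shows the inference is false as stated: for $a=x^2+y^2$ in characteristic $0$ one has $a\in\mathfrak{m}^2$, all partials in $\mathfrak{m}$, $L=R/\langle 2x,2y\rangle=K\neq 0$, yet $\dim_K L<\infty$ and $a$ \emph{is} finitely determined. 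What rules this out for $m>1$ is precisely the paper's appeal to the Krull--Eagon--Northcott height bound for determinantal ideals (\cite[Theorem 13.10]{Mat80}): since $I_m(\mathrm{Jac}(A))$ is proper, $\height I_m(\mathrm{Jac}(A))\le s-m+1$, so $\dim\bigl(R/I_m(\mathrm{Jac}(A))\bigr)\ge m-1>0$ and the ideal contains no power of $\mathfrak{m}$; only then does condition~5 of Proposition \ref{sufficient condition for determinacy} fail. Separability plays no role in establishing $\dim_K L=\infty$; it is only used afterwards, via Theorem \ref{separable criterion}, to turn ``infinite codimension'' into ``not finitely determined''. A secondary point: replacing $a_1$ by a nonzero linear combination of the $a_i$ is a row operation, i.e.\ left multiplication by a constant matrix in $GL(m,K)$, which is not an element of $\mathcal{R}$; it is harmless here only because Fitting ideals are invariant under such multiplication, but you should say so (or, as the paper does, avoid the reduction entirely and argue directly that $\rank\bigl[\frac{\partial a_i}{\partial x_j}(\mathbf{0})\bigr]<m$ makes $I_m(\mathrm{Jac}(A))$ proper).
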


\begin{proof}
	1. Since $I_m(Jac(A))=\{0\}$, by Lemma \ref{General lemma} $\widetilde T_A(GA)$ has infinite codimension. If the assumption on separability is satisfied then by Theorem \ref{separable criterion}, $A$ is not finitely right determined.
	
	2. 	Indeed, we prove that $A$ is right 1-determined in the first assertion. Let $B=[b_1\hskip 5pt b_2\ldots b_m]^T\in M_{m,1}$ be such that $c_i:=b_i-a_i\in \mathfrak{m}^2$ for all $i=1,\ldots, m$. Let $a_{m+1},\ldots, a_s\in \mathfrak{m}$ be such that $\{\overline{a_i}\}$, $i=1,\ldots, s$ is a basis of $\mathfrak{m}/\mathfrak{m}^2$. Let
	$\phi_1, \phi_2: R\to R$ be automorphisms defined by $\phi_1(x_\nu)=a_\nu$ for $\nu=1,\ldots,s$,
	and $\phi_2(x_\nu)= a_\nu   + c_\nu$ for $\nu  = 1,\ldots,m$ and $\phi_2(x_\nu)=a_\nu$  for $\nu  = m + 1,\ldots,s$. 
	Then  $
	\phi : = \phi _2  \circ \phi _1 ^{ - 1}\in \mathcal{R}$ and $\phi(A)=B$. For the second statement, assume that $A$ is finitely right determined. By the assumption on separability of the orbit maps and the statement 1. we have $m\le s$. Moreover, assume by contradiction that $\{\overline{a_{i}}\}$ is linearly dependent in $\mathfrak{m}/\mathfrak{m}^2$. Then 
	$\rank
	\left[ {\frac{{\partial a_{i} }}{{\partial x_j }}({\bf 0})}\right]<m,
	$
	where ${\frac{{\partial a_{i} }}{{\partial x_j }}({\bf 0})}$ is the constant term of ${\frac{{\partial a_{i} }}{{\partial x_j}}}$. This implies that $I_m(Jac(A))$ is a proper ideal of $R$, and by \cite[Theorem 13.10]{Mat80}, this ideal has the height at most $s-m+1$. This yields
	$$\dim \left(R/I_m(Jac(A))\right)\ge m-1>0$$
	so that $I_m(Jac(A))$ can not contain any power of $\mathfrak{m}$. By Proposition \ref{separable criterion}, $A$ is not finitely right determined, a contradiction.
\end {proof}

\begin{Corollary} \rm
    If $\characteristic(K)=0$ we have
    \begin{enumerate}
    	\item[(i)] If $m>s$ then $A$ is not finitely right determined.
    	\item[(ii)] Let $m\le s$. If $m>1$ then $A$ is finitely right determined if and only if $\{\overline{a_i}\}$ is linearly independent in $\mathfrak{m}/\mathfrak{m}^2$. 
    \end{enumerate}
\end{Corollary}

\begin {Remark} \rm
 If $m=1$ and the characteristic of $K$ is arbitrary, then $A=[a]$ is finitely right determined if and only if $M_{m,n}/\widetilde T^e_A(GA)=R\big{/}\left\langle \frac{\partial a}{\partial x_1},\ldots,\frac{\partial a}{\partial x_s} \right\rangle$ is finite dimensional over $K$.
This is well known if $\characteristic(K)=0$. The case of positive characteristic was stated in \cite {BGM12} but the proof contains a gap, which is closed in \cite {GP16}.
\end{Remark}

\vskip 7pt 
{\bf Acknowledgement:} We thank Gerhard Pfister for useful discussions. The second author would like to thank the DAAD (Germany) for support.

%%%%%%%%%%%%%%%%%%%%%%%%%%%%%%%%%%%%%%%%%%%%%%
%\nocite{*}
%\bibliographystyle{amsalpha}
%\bibliography{Phamthuyhuong1}

\begin{thebibliography}{BdPW87}
	
	\bibitem[BdPW87]{BdPW87}
	James W. Bruce, Andrew A. du~Plessis, and C.~T.~C. Wall, \emph{Determinacy and unipotency}, Invent. Math. \textbf{88} (1987), no.~3, 521--554. Zbl 0596.58005
	
	\bibitem[BGM12]{BGM12}
	Yousra Boubakri, Gert-Martin Greuel, Thomas Markwig, \emph{Invariants of
		hypersurface singularities in positive characteristic}, Rev. Mat. Complut.
	\textbf{25} (2012), no.~1, 61--85. Zbl 1279.14004
	
	\bibitem[BK16]{BK16}
	Genrich R. Belitskii, Dmitry Kerner, \emph{{Finite determinacy of matrices over local rings.II. Tangent modules to the miniversal deformations for group-actions involving the ring automorphisms}}, ArXiv e-prints (2016).
	
%	\bibitem[BK13]{BK13}
%	\bysame, \emph{{Matrices over local rings: stability and finite 
%       determinacy}},
%	ArXiv e-prints (2013).
	
%	\bibitem[BK15]{BK15}
%	\bysame, \emph{{Finite determinacy of matrices over local rings.I. The
%			linearization}}, ArXiv e-prints (2015).
	
%	\bibitem[BK16]{BK16}
%	\bysame, \emph{{Finite determinacy of matrices over local rings.II. Tangent
%	modules to the miniversal deformations for group-actions involving the ring
%			automorphisms}}, ArXiv e-prints (2016).
	
	\bibitem[Bor91]{Bor91}
	Armand Borel, \emph{Linear algebraic groups}, second ed., Graduate Texts in
	Mathematics, vol. 126, Springer-Verlag, New York, 1991. Zbl 0726.20030
	
%	\bibitem[Bro93]{Bro92}
%	William~C. Brown, \emph{Matrices over commutative rings}, Monographs and
%	Textbooks in Pure and Applied Mathematics, vol. 169, Marcel Dekker, Inc., New
%	York, 1993. \MR{1200234 (93k:15028)}
	
%	\bibitem[Bru03]{Bru03}
%	J.~W. Bruce, \emph{On families of symmetric matrices}, Mosc. Math. J.
%	\textbf{3} (2003), no.~2, 335--360, 741, Dedicated to Vladimir I. Arnold on
%	the occasion of his 65th birthday. \MR{2025264 (2005d:58069)}
	
%	\bibitem[BT04]{BT04}
%	J.~W. Bruce and F.~Tari, \emph{On families of square matrices}, Proc. London
%	Math. Soc. (3) \textbf{89} (2004), no.~3, 738--762. \MR{2107013
%		(2006e:58054)}
	
	\bibitem[Dam81]{Dam81}
	James Damon, \emph{Finite determinacy and topological triviality. {I}}, Invent.
	Math. \textbf{62} (1980/81), no.~2, 299--324. Zbl 0489.58003
	
	\bibitem[DGPS15]{DGPS15}
	Wolfram Decker, Gert-Martin Greuel, Gerhard Pfister, Hans Sch\"onemann,
	\emph{{\sc Singular} {4-0-2} --- {A} computer algebra system for polynomial
		computations}, \url{http://www.singular.uni-kl.de}, 2015.
	
	\bibitem[dP80]{Ple80}
	Andrew du~Plessis, \emph{On the determinacy of smooth map-germs}, Invent. Math.
	\textbf{58} (1980). Zbl 0446.58004
	
	\bibitem[Eis95]{Eis95}
	David Eisenbud, \emph{Commutative algebra. With a view toward algebraic geometry}, Graduate Texts in Mathematics, vol.
	150, Springer-Verlag, New York, 1995.
	Zbl 0819.13001
	
	\bibitem[FSR05]{WA05}
	Walter Ferrer~Santos, Alvaro Rittatore, \emph{Actions and invariants of
		algebraic groups}, Pure and Applied Mathematics (Boca Raton), vol. 269, Chapman \& Hall/CRC, Boca Raton, FL, 2005. ) Zbl 1079.14053
	
	\bibitem[Gaf79]{Gaf79}
	Terence Gaffney, \emph{A note on the order of determination of a finitely
		determined germ}, Invent. Math. \textbf{52} (1979), no.~2, 127--130.
	Zbl 0419.58004
	
	\bibitem[GP16]{GP16}
	Gert-Martin Greuel, Thuy Huong Pham, \emph{Finite determinacy of complete 
       intersection singularities in arbitrary characteristic}, preprint, 2016.
	
	\bibitem[GK90]{GrK90}
	Gert-Martin Greuel, Heike Kr{\"o}ning, \emph{Simple singularities in positive
		characteristic}, Math. Z. \textbf{203} (1990), no.~2, 339--354. Zbl 0715.14001
	
%	\bibitem[GLS07]{GLS07}
%	G.-M. Greuel, C.~Lossen, and E.~Shustin, \emph{Introduction to singularities
%		and deformations}, Springer Monographs in Mathematics, Springer, Berlin,
%	2007. \MR{2290112 (2008b:32013)}
	
	\bibitem[GP08]{GP07}
	Gert-Martin Greuel, Gerhard Pfister, \emph{A {\bf {S}ingular} introduction
		to commutative algebra}, extended ed., Springer, Berlin, 2008, With
	contributions by Olaf Bachmann, Christoph Lossen and Hans Sch{\"o}nemann,
	With 1 CD-ROM (Windows, Macintosh and UNIX). Zbl 1133.13001
	
%	\bibitem[Har77]{Har77}
%	Robin Hartshorne, \emph{Algebraic geometry}, Springer-Verlag, New
%	York-Heidelberg, 1977, Graduate Texts in Mathematics, No. 52. \MR{0463157 (57
%		\#3116)}
	
	\bibitem[P16]{P16}
	Thuy Huong Pham, \emph{On finite determinacy of hypersurface singularities and matrices in arbitrary characteristic}, Ph.D. thesis, TU Kaiserslautern, 2016.
	
	\bibitem[Mat68]{Mat68}
	John~N. Mather, \emph{Stability of {$C^{\infty }$} mappings. {III}. { F}initely determined map-germs}, Inst. Hautes \'Etudes Sci. Publ. Math. (1968), no.~35,
	279--308. Zbl 0603.58028
	
	\bibitem[{Mat}69]{Mat70}
	John N. {Mather}, \emph{{Stability of $C\sp \infty$ mappings. IV: Classification of stable germs by R-algebras.}}, {Publ. Math., Inst. Hautes \'Etud. Sci.} \textbf{37} (1969), 223--248 (English). Zbl 0202.55102
	
	\bibitem[Mat89]{Mat80}
	Hideyuki Matsumura, \emph{Commutative ring theory}, second ed., Cambridge
	Studies in Advanced Mathematics, vol.~8, Cambridge University Press,
	Cambridge, 1989, Translated from the Japanese by M. Reid. Zbl 0666.13002
	
	\bibitem[Tou68]{Tou68}
	Jean-Claude Tougeron, \emph{Id\'eaux de fonctions diff\'erentiables. {I}}, Ann. Inst. Fourier (Grenoble) \textbf{18} (1968), no.~fasc. 1, 177--240.
	Zbl 0251.58001
	
	\bibitem[Wal81]{Wal81}
	Charles T.~C. Wall, \emph{Finite determinacy of smooth map-germs}, Bull. London Math. Soc. \textbf{13} (1981), no.~6, 481--539. Zbl 0451.58009
	
%	\bibitem[ZS75]{ZS75}
%	Oscar Zariski and Pierre Samuel, \emph{Commutative algebra. {V}ol. 1},
%	Springer-Verlag, New York-Heidelberg-Berlin, 1975, With the cooperation of I.
%	S. Cohen, Corrected reprinting of the 1958 edition, Graduate Texts in
%	Mathematics, No. 28. \MR{0384768 (52 \#5641)}
	
\end{thebibliography}

\providecommand{\bysame}{\leavevmode\hbox to3em{\hrulefill}\thinspace}
\providecommand{\MR}{\relax\ifhmode\unskip\space\fi MR }
% \MRhref is called by the amsart/book/proc definition of \MR.
\providecommand{\MRhref}[2]{%
	\href{http://www.ams.org/mathscinet-getitem?mr=#1}{#2}
}
\providecommand{\href}[2]{#2}

Fachbereich Mathematik, Universit\"at Kaiserslautern, Erwin-Schr\"odinger Str.,
67663 Kaiserslautern, Germany

E-mail address: greuel@mathematik.uni-kl.de

\vskip 7pt 

Department of Mathematics, Quy Nhon University, 170 An Duong Vuong
Street, Quy Nhon City, Vietnam

Email address: phamthuyhuong@qnu.edu.vn
\end{document}